\renewcommand{\aa}{\mathbb{A}}
\newcommand{\pp}{\mathbb{P}}
\newcommand{\zz}{\mathbb{Z}}
\newcommand{\qq}{\mathbb{Q}}
\renewcommand{\tt}{\mathbb{T}}
\newcommand{\emb}{\hookrightarrow}
\newcommand{\surj}{\twoheadrightarrow}
\newcommand{\isom}{\xrightarrow{_\sim}}
\newcommand{\im}{\operatorname{im}}
\renewcommand{\hom}{\operatorname{Hom}}
\newcommand{\pr}{\operatorname{pr}}
\DeclareMathOperator{\ass}{Ass}
\newcommand{\bc}[1]{\tilde{#1} }
\newcommand{\bcw}[1]{\widetilde{#1} }
\newcommand{\bcpb}[1]{\tilde{#1}^{\ast} }
\newcommand{\bcwpb}[1]{\widetilde{#1}^{\ast} }
\DeclareMathOperator{\spec}{Spec}
\DeclareMathOperator{\blow}{Bl}
\DeclareMathOperator{\supp}{Supp}
\DeclareMathOperator{\trop}{Trop}
\DeclareMathOperator{\trpz}{\mathit{val}}
\newcommand{\brk}[1]{\left( #1 \right) }
\newcommand{\crb}[1]{\left\{ #1 \right\} }
\newcommand{\inn}[1]{\left\langle #1 \right\rangle }
\newcommand{\cl}[1]{\mathfrak{#1}}
\newcommand{\cls}[1]{\mathcal{#1}}
\newcommand{\cla}[1]{\EuScript{#1}}
\newcommand{\twovec}[2]{\left (\hspace{-2pt}\begin{array}{c} #1 \\ #2 \\ \end{array}\hspace{-2pt} \right )}
\newcommand{\twomat}[4]{\left (\begin{array}{cc} #1 & #2 \\ #3 & #4 \\ \end{array}\right )}
\newcommand{\gl}[1]{\mathrm{GL}_{#1}}
\newcommand{\gln}{\mathrm{GL}_n}
\renewcommand{\sl}[1]{\mathrm{SL}_{#1}}
\newcommand{\sln}{\mathrm{SL}_n}
\newcommand{\so}[1]{\mathrm{SO}_{#1}}
\newcommand{\pgln}{\mathrm{PGL}_n}
\renewcommand{\labelenumi}{(\roman{enumi})}
\newtheorem{thm}{Theorem}[section]
\newtheorem{conj}{Conjecture}[section]
\newtheorem{prop}[thm]{Proposition}
\newtheorem{lem}[thm]{Lemma}
\newtheorem{cor}[thm]{Corollary}
\newtheorem{defn}[thm]{Definition}
\newtheorem{exam}[thm]{Example}
\newtheorem{rem}[thm]{Remark}
\begin{document}

\title{Spherical Tropicalization}
\author{Jenia Tevelev}
\author{Tassos Vogiannou}
\date{\today}
%\email{vogiannou@math.umass.edu}

\begin{abstract}
We extend tropicalization and tropical compactification of subvarieties of algebraic tori %over a trivially valued algebraically closed field 
to subvarieties of spherical homogeneous spaces. %We show the existence of tropical compactifications in a general setting.  
Given a tropical compactification of a subvariety, we show that the support of the colored fan of the ambient spherical variety agrees with the tropicalization of the  subvariety. 
The proof is based on our equivariant version of the flattening by blow-up theorem.
We provide many examples.
% of tropicalization of subvarieties of $\gln$, $\sln$, and $\pgln$.
\end{abstract}

\dedicatory{In memory of Ernest Borisovich Vinberg}

\maketitle

%\tableofcontents

%----------------------------------------------------------------------------------------------------------------------------------------------
\section{Introduction} \label{intro}

Let $k$ be an algebraically closed field, % of arbitrary characteristic, 
$K=k((t))$ the field of Laurent series % over~$k$, 
and $\overline{K}=\bigcup_n k((t^{1/n}))$ the field of Puiseux series % over $k$
with valuation % (which is the algebraic closure of $K$ in characteristic zero). Consider the discrete valuation
$$
\nu:\overline{K}^{\times}\rightarrow \qq,\quad \sum_n c_nt^n\mapsto \min\{n:c_n\neq 0\}.
$$
%($k$ is trivially valued). We denote by $\nu$ its extension to a valuation $\overline{K}^{\times}\rightarrow \qq$ defined similarly.

Let $\tt^n= (k^{\times})^n$ be the algebraic torus, % of dimension $n$ over $k$, 
$\Lambda=\hom(\tt^n,k^{\times})$ its character group, and 
$\mathrm{Q}=\hom(\Lambda,k^{\times})\cong \qq^n$. The valuation $\nu$ induces a surjective map:
\begin{equation}\label{torictrop}
\trpz:T(\overline{K})\rightarrow \mathrm{Q}
\end{equation}
that sends $(x_1(t),\ldots,x_n(t))\in (\overline{K}^{\times})^n$ to $(\nu(x_1(t)),\ldots,\nu(x_n(t)))\in \qq^n$.
Given a closed subvariety $Y\subseteq \tt^n$, its tropicalization %of $Y$, denoted 
$\trop Y$ is the image $\trpz( Y(\overline{K}))\subseteq\qq^n$. It is a piece-wise linear set,
more precisely the support of a rational polyhedral fan. 
%The combinatorial structure of the tropicalization of $Y$ carries information about the original variety, and is usually easier to work with. 
More about tropicalizations and their use can be found in %, among many others, 
\cite{MB}, \cite{G}, \cite{EKL}.

Given a closed subvariety $Y\subseteq \tt^n$, a tropical compactification $\overline Y$, introduced in~\cite{Te}, 
is the closure of $Y$ in a toric variety $X$ of $\tt^n$ such that 
$\overline{Y}$ is a complete variety %, in a toric variety $X$ associated to a fan $\cla{F}$ in $\mathrm{Q}$, such that 
and the multiplication map $\mu_{\overline{Y}}$ of $\overline{Y}$ is faithfully flat:
\begin{equation}\label{sgasrgasrgar}
\mu_{\overline{Y}}:T\times \overline{Y}\rightarrow X, \quad (g,x)\mapsto gx.
\end{equation}
The toric variety $X$ is  
given by a fan $\cla{F}$ in $\mathrm{Q}$.
The relation between the tropicalization
and a tropical compactification is very simple, $\supp \cla{F}=\trop Y$.
Existence of tropical compactifications
%was were introduced, and their existence and relation to tropicalizations were 
was shown in \cite{Te}, see also \cite{HKT, ST, LQ, U}.

%Tropical compactifications have nice properties. For instance, if $X$ is smooth, then the boundary of $\overline{Y}$ is divisorial and has combinatorial normal crossings. 
%The applications of tropicalization and tropical compactifications have motivated their extension to more general settings than subvarieties of tori. In particular, tropical compactifications for the case  $k$ is not trivially valued (non-constant coefficient case) were introduced in \cite{LQ}. Tropicalization of subvarieties of toric varieties is treated in \cite{P}. Tropicalizations and tropical compactifications of log-regular varieties were introduced in \cite{U}. 
We extend the tropicalization and tropical compactifications to subvarieties $Y\subset G/H$ of spherical homogeneous spaces of connected reductive groups. Spherical means that
a Borel subgroup $B\subset G$ acts on $G/H$ with an open orbit. 
As in the toric case, spherical varieties, i.e.~equivariant open embeddings of $G/H$
into normal $G$-varieties $X$, are in a bijection with combinatorial data (colored fans). 
This correspondence is reviewed briefly in \S\ref{spherreview},
see also \cite{LV, K, Ti}. 
%The reason such generalization is possible is that, 
%Let $G$ be a connected reductive group over $k$, $B\subseteq G$ a Borel subgroup, and let $G/H$ be a spherical homogeneous space for some closed algebraic subgroup $H\subseteq G$. 
There is a map 
\begin{equation}\label{spherictrop}
\trpz: (G/H)(\overline{K})\rightarrow \mathrm{Q}
\end{equation}
analogous to (\ref{torictrop}),
where $\mathrm{Q}=\hom(\Lambda,\qq)$ %\cong \Lambda\otimes_{\zz}\qq$, 
and $\Lambda$ is the weight lattice, i.e.~the subgroup of characters of $B$ that are weights of $B$-semi-invariant functions on $G/H$. The image of $\trpz$ is the valuation cone $\cls{V}$. % (defined in \S\ref{spherreview}). 
We define the spherical tropicalization %of a closed subvariety $Y\subseteq G/H$ is defined to be 
$\trop Y$ as $\trpz(Y(\overline{K}))$. By Lemma \ref{workink},
$\trop Y$ is a conical set generated by $\trpz(Y(K))$.
%we can work over $K$ instead of $\overline{K}$, i.e. find the image $\trpz(Y(K))$ instead of $\trpz(Y(\overline{K}))$, and then multiply by scalars in $\qq_{\geq 0}$ to get the rest of $\trop Y$ (here $G/H(K)$ is viewed as a subset of $G/H(\overline{K})$ via the morphism $\spec \overline{K}\rightarrow \spec K$ induced by the inclusion $K\emb\overline{K}$).

%Consider open dense $G$-embedding $G/H\rightarrow X$ on normal varieties. Such a variety $X$ is called spherical. 
\begin{defn}\label{rgaerhaer}
\renewcommand{\labelenumi}{(\roman{enumi})}
The closure $\overline{Y}\subseteq X$ is called a \emph{tropical compactification} of $Y$ if $\overline{Y}$ is complete, and the multiplication map $\mu_{\overline{Y}}:\,G\times \overline{Y}\rightarrow X$ is faithfully flat.
%$$\mu_{\overline{Y}}:G\times \overline{Y}\rightarrow X,\quad (g,x)\mapsto gx.$$
\end{defn}

%We relate spherical tropicalization and spherical tropical compactification in \S\ref{amoeba} and  \S\ref{tropcompactif}.

\begin{thm} \label{mainthmspherical}
For any closed subvariety $Y$ of a spherical homogeneous space $G/H$,
%Let $Y$ be a closed subvariety of a spherical homogeneous space $G/H$. Then:
\begin{enumerate}
\item Tropical compactifications of $Y$ in toroidal spherical varieties exist.
\item If $\overline{Y}\subseteq X$ is a tropical compactification, where $X$ is a spherical variety associated to a colored fan $\cla{F}$, then $\supp \cla{F}=\trop Y$.
\end{enumerate}
\end{thm}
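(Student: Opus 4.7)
The overall plan follows Tevelev's template for subvarieties of tori, leveraging that toroidal spherical varieties behave locally like toric varieties along the boundary. I would begin with a complete toroidal embedding $G/H \emb X_0$ produced by Luna--Vust, which exists because the valuation cone $\cls{V}$ is cosimplicial and can be tiled by colorless cones covering it. Let $\overline{Y}_0$ be the closure of $Y$ in $X_0$; it is complete. The multiplication map $\mu_0 \colon G \times \overline{Y}_0 \to X_0$ need not be flat, but one invokes an equivariant flatification of Raynaud--Gruson type to obtain a $G$-equivariant proper birational morphism $X \to X_0$ refining the colored fan of $X_0$ by colorless cones (so $X$ is still toroidal) and making the strict transform of $\mu_0$ flat. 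The strict transform of $\overline{Y}_0$ is then the closure $\overline{Y}$ of $Y$ in $X$; it is complete, the new multiplication map $\mu_{\overline{Y}}$ is flat, and surjectivity---hence faithful flatness---follows because $\overline{Y}$ meets the open orbit $G/H$ and a flat map with complete source is universally open.

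\textbf{Part (ii).} I prove the two inclusions separately. For $\trop Y \subseteq \supp \cla{F}$: given $y \in Y(\overline{K})$ with $\trpz(y) = v$, up to a finite extension view $y$ as a map $\spec k((t^{1/n})) \to G/H \subseteq X$; by completeness of $\overline{Y}$ and the valuative criterion this extends to $\spec k[[t^{1/n}]] \to \overline{Y}$, whose closed point lies in a unique $G$-orbit $O$ with corresponding cone $\sigma \in \cla{F}$ (colorless, since $X$ is toroidal). Using the definition of $\trpz$ via $B$-semi-invariant functions and the Luna--Vust orbit--cone correspondence, which recovers $\sigma$ from the vanishing orders along $O$ of the $B$-semi-invariants of weights $\chi \in \Lambda$, one identifies $v$ with an element of $\sigma$. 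For $\supp \cla{F} \subseteq \trop Y$: by faithful flatness $\mu_{\overline{Y}}$ is surjective, so $\overline{Y}$ meets every $G$-orbit of $X$. Fix $\sigma \in \cla{F}$ and $v$ in its relative interior, and pick $\bar{y} \in \overline{Y} \cap O_\sigma$; the local structure theorem for toroidal spherical varieties provides a smooth $B$-equivariant map from a neighborhood of $\bar{y}$ onto an affine toric chart whose fan is the star of $\sigma$, reducing the construction of a $\overline{K}$-point of $Y$ specializing to $\bar{y}$ with $\trpz = v$ to the toric case of Tevelev. Density of relative interiors and closedness of $\trop Y$ then yield the inclusion.

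\textbf{Main obstacle.} The hardest step is the equivariant flattening in part (i): one must ensure both that the refining blowup corresponds to a colored fan whose cones are still colorless (so the toroidal hypothesis is preserved and Luna--Vust combinatorics remain applicable) and that the center of the blowup is disjoint from the open orbit $G/H$, so $Y$ itself is not modified. A subsidiary but nontrivial difficulty is the lifting step in the second half of part (ii): producing a $\overline{K}$-point of $Y$ with prescribed tropical image in the relative interior of a given cone combines the local toric chart with a Hensel-type deformation argument, and it is here that flatness of $\mu_{\overline{Y}}$, rather than mere surjectivity, is essential.
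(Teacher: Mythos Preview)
Your outline is close in spirit to the paper's argument, but there are two genuine gaps and one avoidable detour.

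\textbf{Surjectivity in part (i).} Your claim that ``surjectivity follows because $\overline{Y}$ meets the open orbit and a flat map with complete source is universally open'' is incorrect: the source $G\times\overline{Y}$ is \emph{not} complete, since $G$ is affine. Flatness does give that the image of $\mu_{\overline{Y}}$ is open and $G$-stable, but open and dense does not imply equal to $X$. The paper's remedy (Theorem~\ref{tropcompexist}) is simply to replace $X$ by the open image $\mu_{\overline{Y}}(G\times\overline{Y})$; this still contains $G/H$, and $\overline{Y}$ remains proper and closed in it.

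\textbf{Preserving the toroidal hypothesis.} You correctly flag equivariant flattening as the hard step, but then create an artificial difficulty by insisting the blowup stay toroidal throughout. The paper avoids this entirely: it flattens in an arbitrary normal (hence spherical, possibly colored) variety via Corollary~\ref{normtropcompexist}, and only \emph{afterwards} removes the colors. Decoloring is a proper birational $G$-morphism fixing $G/H$, and Proposition~\ref{tropcompbir} shows tropical compactifications are stable under such maps. So there is no need to control colors during the flattening.

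\textbf{Part (ii) for non-toroidal $X$.} Your argument repeatedly assumes $X$ is toroidal (``colorless, since $X$ is toroidal''), but the statement is for an arbitrary spherical $X$ with colored fan $\cla{F}$. The paper handles this by passing to the decoloration $X'\to X$, whose colorless fan $\cla{F}'$ has the same support as $\cla{F}$; Proposition~\ref{tropcompbir} transports the tropical compactification to $X'$, and then the toroidal case (Proposition~\ref{tropsupport}, built on the spherical Tevelev lemma, Proposition~\ref{tevelevlem}) gives $\supp\cla{F}=\supp\cla{F}'=\trop Y$. Your direct approach via the local structure theorem and reduction to the toric case could be made to work for toroidal $X$, but you would still need this decoloration step to cover the general statement. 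Also, your appeal to ``closedness of $\trop Y$'' at the end is circular: that closedness is a \emph{consequence} of the theorem, not an input.
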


%We review toroidal spherical varieties in Definition \ref{toroiddef} and the support of a colored fan in \S\ref{spherreview}. In part (ii), the tropical compactification $\overline{Y}\subseteq X$ is not assumed to be in a toroidal spherical variety. 
%A direct consequence of this theorem is that the tropicalization of any closed subvariety of $G/H$ is a piece-wise linear object in $\mathrm{Q}$ that is the support of a fan.
Definition~\ref{rgaerhaer} makes sense for subvarieties of arbitrary homogenous spaces
and we prove Theorem~\ref{mainthmspherical} in \S\ref{tropcompactif} based on 
analogous results in this more general setting.
This, in turn, is based
on our generalization (Theorem~\ref{equivflat}) of the flattening by blow-up theorem \cite{RG}
to the equivariant setting.

In the toric case, tropicalization is usually defined in a more general setting, for subvarieties of $T$ defined over the field 
$\overline K$ of Puiseux series, in which case it is a support of a polyhedral complex rather than a fan.
Tropical compactifications in this setting have been studied in \cite{LQ,G,HKT}.
In the spherical setting, the tropicalization of a subvariety $Y\subset G/H$ defined over $\overline K$
is also well-defined, using the formula $\trpz(Y(\overline{K}))$. It would be interesting
to study the tropical compactifications in this setting and to exhibit interesting examples.

We work out many examples of spherical tropicalization
and spherical tropical compactifications for subvarieties of non-toric spherical homogeneous spaces
in \S\ref{allexamples}.
%First we show that tropicalization of subvarieties of a torus $\tt^n$, when viewed as a spherical homogeneous space $G/H$ for $G=B=\tt^n$ and $H$ the trivial subgroup, is the same as the usual toric tropicalization. Thus spherical tropicalization is indeed an extension of the toric one.
A basic example is $\gln$ viewed as a spherical homogeneous space of $\gln\times \gln$, which
acts on $\gln$ by left and right multiplication. Recall that if $x=(x_{ij}(t))$ is an invertible matrix with entries in $K$, there exist matrices $g=(g_{ij})$ and $h=(h_{ij})$ with entries in $k[[t]]$, such that $gxh$ is in ``reversed'' Smith normal form
$$
gxh= \brk{
\begin{array}{cccc}
t^{\alpha_1} & 0 & \ldots & 0 \\
0 & t^{\alpha_{2}} & \ldots & 0 \\
\vdots & \vdots & \ddots & \vdots \\
0 & 0 & \ldots & t^{\alpha_n} \\
\end{array}},
$$
for some integers $\alpha_1\geq \cdots \geq \alpha_n$,
which we call the \emph{invariant factors} of $x$ slightly abusing the standard terminology. 
%More generally, we can define invariant factors of $x\in\gln(\overline{K})$  by viewing $x$ as an element of $\gln(k((t^{1/m})))$ for some $m$ and finding matrices $g,h$ with entries in $k[[t^{1/m}]]$ such that $gxh$ is diagonal with entries $(t^{1/m})^{\alpha_1},\ldots,(t^{1/m})^{\alpha_n}$ along the diagonal, for some integers $\alpha_1\geq \cdots \geq \alpha_n$. The~rational numbers $\alpha_1/m , \ldots , \alpha_n/m$ are called the invariant factors of $x$. 

%We show that for a certain choice of a Borel group and basis of $\Lambda$, which give rise to a dual basis on $\mathrm{Q}$, the tropicalization of a closed subvariety of $\gln$ is a set in $\mathrm{Q}\cong \qq^n$ that can be calculated as in the following theorem.

\begin{thm} \label{tropgln}
Let $Y\subset \gln$ be a closed subvariety. %, defined by some ideal $I\subseteq k[\gln]$. 
Then $\trop Y\subset \qq^n$ is the conical subset generated by 
$n$-tuples %$(\alpha_1,\ldots,\alpha_n)$ 
of invariant factors %(in decreasing order) 
of   matrices $x\in Y(K)$. %, that satisfy the equations of $I$.
\end{thm}
%\noindent As mentioned earlier, we may work over $K$ when calculating the tropicalization of a subvariety.

If a closed subvariety $Y\subset \gl{n}$ admits a parametrization then $\trop Y$ can be calculated in a straightforward and elementary way
analogous to the approach~\cite{ST} in the toric case.
 For instance, to find the tropicalization of the subvariety $Y=V(x_{11}-x_{22},x_{12}^3-x_{21})\subset \gl{2}$, %where $x_{ij}$ are coordinates for $\gl{2}$, 
one can write a matrix in $Y(K)$ in the form
$$
\twomat{y(t)}{z(t)}{z(t)^3}{y(t)},\quad y(t),z(t)\in K,
$$
which allows to easily find its possible invariant factors. The tropicalization of this variety is a striped area
 in Figure \ref{tropexam5}. The gray area is the rest of the valuation cone.

\begin{figure}
\caption{Tropicalization of $Y=V(x_{11}-x_{22},x_{12}^3-x_{21})$}
\label{tropexam5}
\centering
\includegraphics[width=0.45\textwidth]{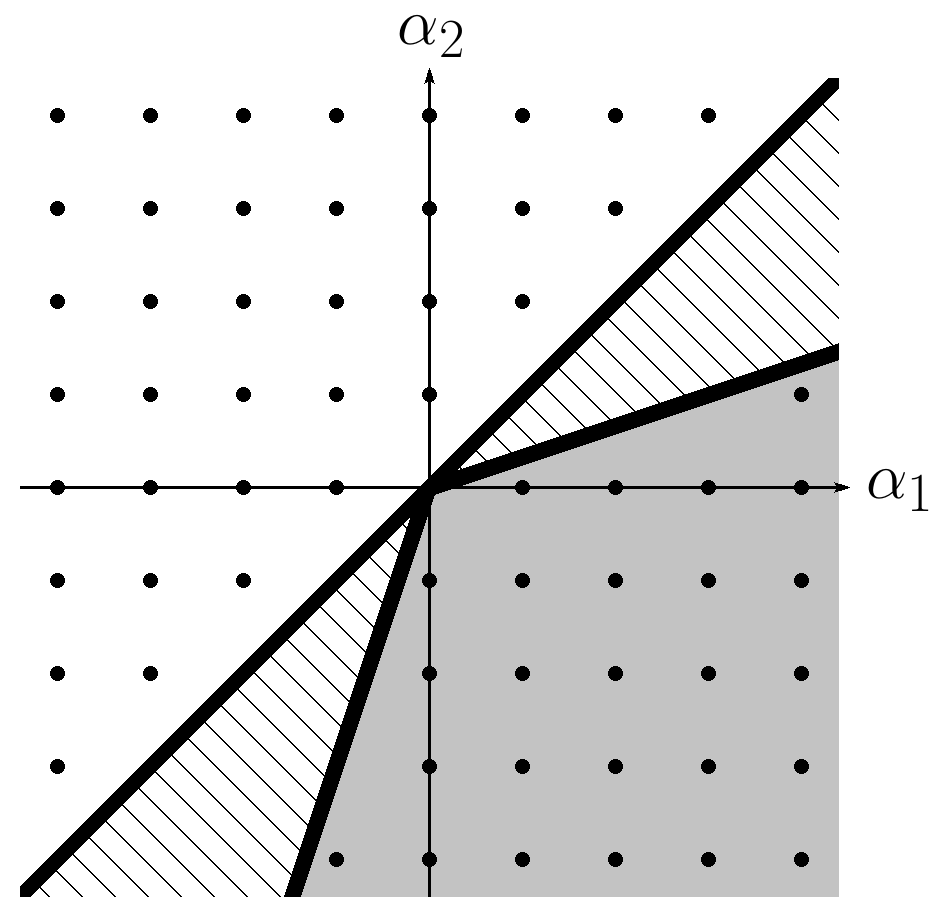}
\end{figure}

Valuations of defining equations of $Y$ impose restrictions on possible invariant factors of matrices in $Y(K)$,
e.g.~if $Y\subset\gl{2}$ is given by $x_{11}^2x_{12}-x_{22}^5+x_{11}x_{21}^3=1$
then any matrix $(x_{ij}(t))\in Y(K)$ satisfies
$$
\min\crb{2\nu(x_{11}(t))+\nu(x_{12}(t)),-5\nu(x_{22}(t)),\nu(x_{11}(t))+3\nu(x_{21}(t))}=0.
$$
But, as in the toric case, given a set of defining equations $f_1,\ldots,f_n$ of $Y$, 
$\trop Y$ is not always the intersection of $\trop V(f_i)$ (see Example \ref{examplesgl2}).

%If $f_1,\ldots,f_n$ come from matrix products, Horn's inequalities \eqref{hornequality} and \eqref{horninequality} become helpful. %, see Example \ref{examtropso4}.
%Given three matrices $A,B,C\in\gl{n}(K)$ such that $AB=C$, the integers that appear as invariant factors of them are described by Horn's inequalities (see \S\ref{represvarty}). We demonstrate this in Example \ref{examtropso4}.
%The cases of subvarieties of $\sln$ and $\pgln$ are similar and are treated in \S\ref{lingps} and \S\ref{pgln}.
Many interesting moduli spaces in algebraic geometry are subvarieties of spherical varieties,
for example the representation %$\gl{n}$-representation 
variety $Y$ of the fundamental group of the $2$-sphere with $3$ punctures
is a subvariety of $G=(\gl{n})^3$, which can viewed as a spherical homogeneous space of $G\times G$.
In \S\ref{represvarty} we show that the tropicalization of 
$Y$ is a cone given %, where $G$ is $\gln$ or $\sln$.  The points in $\trop Y$ are 
by the Horn inequalities. % \eqref{hornequality} and \eqref{horninequality}.
%For the case $G=\sl{2}$, the tropicalization is given in Figure \ref{troprepvar}.
We plan to compute the tropical compactification of $Y$ in a future work.
Spherical tropicalization and tropical compactification of arbitrary representation varieties
is a very interesting problem, which can lead to a 
compactification theory for character varieties, see~\cite{V}.

\smallskip
\noindent \emph{Acknowledgements}. 
The project was  supported by the NSF grant DMS-1701704, Simons Fellowship, and the HSE University Basic Research Program and Russian Academic Excellence Project '5-100'.

%
%
%

%----------------------------------------------------------------------------------------------------------------------------------------------

\section{Compactifications of Subvarieties of General Homogeneous Spaces} \label{tropcompactifgeneral}

\subsection{Equivariant Flattening by Blow-up} \label{equivariantflatsect}

In this subsection all schemes and morphisms are over a fixed noetherian scheme $S$ 
including a flat affine surjective group scheme ~$G$ of finite type, 
a  $G$-morphism $f:\,{X}\rightarrow {Y}$ of $G$-schemes of finite type,
a dense open $G$-subset $U\subset Y$, and 
a closed $G$-subscheme $Z\subset X$ or more generally a coherent equivariant sheaf (also called $G$-sheaf) $\cla{M}$ on $X$.
We write
\begin{equation}\label{argaerh}
\mu:G\times_S X\rightarrow X,\quad (g,x)\mapsto gx
\end{equation}
for the multiplication map. 
We will assume that 
$f|_{Z\cap f^{-1}(U)}$ is a flat morphism, or more generally that 
the coherent sheaf  $\cla{M}|_{f^{-1}(U)}$ is flat over~$U$\footnote{
When $Y$ is reduced, there exists an open set $U'$  such that $\cla{M}|_{f^{-1}(U')}$ is flat \cite[Th.~6.9.1]{EGAIV}
over $U'$. 
The image $U=\mu(G\times_S U')$ is then a $G$-stable open set such that $\cla{M}|_{f^{-1}(U)}$ is flat over~$U$.}.

\begin{defn}[{\cite[Ch. 4]{R}}]\label{stricttransformchar}\label{puretransformclosed}
Consider a Cartesian diagram
\begin{equation}
\begindc{\commdiag}[40]
\obj(0,8)[xp]{$\bcw{X}$} \obj(0,0)[yp]{$\bcw{Y}$} \obj(16,0)[y]{$Y$} \obj(16,8)[x]{$X$}
\mor{xp}{yp}{$\bc{f}$}[-1,0] \mor{xp}{x}{$\bc{u}$} \mor{yp}{y}{$u$} \mor{x}{y}{$f$}
\enddc
\label{sefvwerv}\end{equation}
where 
$u$
is a projective $G$-morphism, which induces an isomorphism of open dense 
$G$-subsets $u|_{\bcw{U}}:\bcw{U}\isom U$. 
Let $\bcw{\cla{M}}=\bcpb{u}\cla{M}$.
A quotient coherent sheaf $\bcw{\cla{M}}^{pt}=\bcw{\cla{M}}/\bcw{\cla{N}}$  is called a pure transform
of $\cla{M}$ if $\bcw{\cla{N}}$ vanishes on $\bc{f}^{-1}(\bcw{U})$ and
$\ass(\bcw{\cla{M}}^{pt})\subseteq \bc{f}^{-1}(\bcw{U})$.
Equivalently, $\bcw{\cla{N}}\subset \bcw{\cla{M}}$ is  the subsheaf of all sections supported on $\bc{f}^{-1}(\bcw{Y}-\bcw{U})$.
For a subscheme $Z\subset X$, the pure transform is defined as the scheme-theoretic closure
$$\bcw{Z}=\overline{\bc{u}^{-1}(Z\cap f^{-1}(U)) }\subseteq\bcw{X}.$$
\end{defn}

Pure transforms of coherent sheaves and subschemes are related as follows:

\begin{lem} \label{puretransagree}
$(\cla{O}_X/\cla{I}_Z)^{pt}=\cla{O}_{\bcw{X}}/\cla{I}_{\bcw{Z}}$, where 
$\cla{I}_Z$, $\cla{I}_{\bcw{Z}}$ are  sheaves of ideals of $Z$ and $\bcw Z$. 
%Then $(\cla{O}_X/\cla{I}_Z)^{pt}=\cla{O}_{\bcw{X}}/\cla{I}_{\bcw{Z}}$.
\end{lem}

\begin{proof}
Let $\cla{M}=\cla{O}_X/\cla{I}_Z$ and $\bcw{\cla{M}}=\bcpb{u}\cla{M}=\cla{O}_{\bcw{X}}/\cla{I}_{\bc{u}^{-1}(Z)}$.
Then $\bcw{\cla{M}}^{pt}=\bcw{\cla{M}}/\bcw{\cla{N}}=\cla{O}_{\bcw{X}}/\cla{I}_{\bcw{Z}'}$
%where $\bcw{\cla{N}}$, the subsheaf of sections supported outside $\bc{f}^{-1}(\bcw{U})$ 
%is of the form $\cla{I}_{\bcw{Z}'}/\cla{I}_{\bc{u}^{-1}(Z)}$ 
for a sheaf of ideals $\cla{I}_{\bcw{Z}'}$
that determines a closed subscheme $\bcw{Z}'\subseteq \bc{u}^{-1}(Z)$. 
%Thus  $\bcw{\cla{M}}^{pt}=\cla{O}_{\bcw{X}}/\cla{I}_{\bcw{Z}'}$. 
We claim that $\bcw{Z}'=\bcw{Z}$.
Since  $\bcw{\cla{N}}$ vanishes on $\bc{f}^{-1}(\bcw{U})$, 
$\cla{I}_{\bcw{Z}'}|_{\bc{f}^{-1}(\bcw{U})}=\cla{I}_{\bc{u}^{-1}(Z)}|_{\bc{f}^{-1}(\bcw{U})}$
and
$$
\bcw{Z}'\cap \bc{f}^{-1}(\bcw{U}) = \bc{u}^{-1}(Z) \cap \bc{f}^{-1}(\bcw{U})=\bc{u}^{-1}(Z\cap f^{-1}(U)).
$$
From the definition of the pure transform, $\bcw{Z}\subseteq \bcw{Z}'$. Furthermore, $\bcw{Z}'\cap \bc{f}^{-1}(\bcw{U})=\bcw{Z}\cap \bc{f}^{-1}(\bcw{U})$, and hence
$
\brk{\cla{O}_{\bcw{X}}/\cla{I}_{\bcw{Z}'}}|_{\bc{f}^{-1}(\bcw{U})}=\brk{\cla{O}_{\bcw{X}}/\cla{I}_{\bcw{Z}}}|_{\bc{f}^{-1}(\bcw{U})}.
$

Assume that  $\bcw{Z}\ne \bcw{Z}'$.
Then we can find
% and pick a point $P$
%such that $\cla{O}_{\bcw{Z},P}\ne \cla{O}_{\bcw{Z}',P}$.
%Let $P\in \bcw{Z}'-\bcw{Z}$. 
%Pick 
an affine open set $V=\spec A$ in $\bcw{X}$ 
such that
%containing $P$. Write 
$\bcw{Z}'\cap V=V(\cl{a})$, $\bcw{Z}\cap V=V(\cl{b})$ with $\cl{a}\subset \cl{b}$ ideals of $A$ (strict inclusion). Let  $a\in \cl{b}\setminus \cl{a}$, so that $a$ is zero in $A/\cl{b}$, but non-zero in $A/\cl{a}$. Let $\cl{p}$ be in $f^{-1}(\bcw{U}) \cap V$. If $\cl{p}\not \in \bcw{Z}'$ then clearly $\brk{\cla{O}_{\bcw{X}}/\cla{I}_{\bcw{Z}'}}_\cl{p}=0$. If $\cl{p}$ is in 
$\bcw{Z}'\cap\bc{f}^{-1}(\bcw{U})\cap V=\bcw{Z}\cap\bc{f}^{-1}(\bcw{U})\cap V$, then since $\brk{\cla{O}_{\bcw{X}}/\cla{I}_{\bcw{Z}'}}|_{\bc{f}^{-1}(\bcw{U})}=\brk{\cla{O}_{\bcw{X}}/\cla{I}_{\bcw{Z}}}|_{\bc{f}^{-1}(\bcw{U})}$,
$$
a=0\text{ in }\brk{\cla{O}_{\bcw{X}}/\cla{I}_{\bcw{Z}'}}_\cl{p}= \brk{\cla{O}_{\bcw{X}}/\cla{I}_{\bcw{Z}}}_\cl{p}= (A/\cl{b})_{\cl{p}}.
$$
Thus $a$ is a non-zero local section supported outside $\bc{f}^{-1}(\bcw{U})$. This contradicts the definition of the pure transform of $\cla{M}$, hence $\bcw{Z}'=\bcw{Z}$.
\end{proof}

\begin{prop} \label{puretransformequiv}
$\cla{M}^{pt}$ is a $G$-sheaf on $\bcw{X}$ and $\bcw{Z}$ is a $G$-stable closed subscheme.
\end{prop}

\begin{proof}
%Write $\cla{M}^{pt}=\bcw{\cla{M}}/\bcw{\cla{N}}$ for the pure transform of $\cla{M}$, where $\bcw{\cla{M}}=\bcpb{u}\cla{M}$, 
%and $\bcw{\cla{N}}\subseteq \bcw{\cla{M}}$ is the subsheaf of sections supported outside $\bc{f}^{-1}(\bcw{U})$. 
$\bcw{X}$ has a natural structure of a $G$-scheme and $\bcw{\cla{M}}$ of a $G$-sheaf
such that $\bc{f}$ and $\bc{u}$ are $G$-morphisms.
It suffices to show that $\bcw{\cla{N}}\subseteq \bcw{\cla{M}}$ is a $G$-subsheaf. Write
$$
\bc{\mu}:G\times_S \bcw{X}\rightarrow \bcw{X}\quad \text{and}\quad \bcw{\pr}_2:G\times_S \bcw{X}\rightarrow \bcw{X}
$$
for the multiplication map and the second projection of $G\times_S \bcw{X}$, respectively, and
$$
\alpha:\bcpb{\mu}\bcw{\cla{M}}\rightarrow \bcwpb{\pr}_2\bcw{\cla{M}}
$$
for the isomorphism of $\cla{O}_{G\times_S\bcw{X}}$-modules that defines the $G$-structure on $\bcw{\cla{M}}$. We want to show that $\alpha(\bcpb{\mu}\bcw{\cla{N}})\subseteq \bcwpb{\pr}_2\bcw{\cla{N}}$. Since $\bc{\mu}$ is flat,
% and $\bcw{\cla{N}}\subseteq \bcw{\cla{M}}$ is the subsheaf of sections supported outside $\bc{f}^{-1}(\bcw{U})$, 
$\bcpb{\mu}\bcw{\cla{N}}$ is the subsheaf of sections of $\bcpb{\mu}\bcw{\cla{M}}$ supported outside $\bc{\mu}^{-1}(\bc{f}^{-1}(\bcw{U}))$, and similarly for $\bcw{\pr}_2^{\ast}\cla{N}$ \cite[II, Ex. 1.20]{H}. Note that
$$
\bc{\mu}^{-1}(\bc{f}^{-1}(\bcw{U}))=G\times_S \bc{f}^{-1}(\bcw{U})=\bcw{\pr}_2^{-1}(\bc{f}^{-1}(\bcw{U})),
$$
as $\bc{f}^{-1}(\bcw{U})$ is $G$-stable. 
Thus the isomorphism $\alpha$ preserves the set of sections supported outside of $\bc{\mu}^{-1}(\bc{f}^{-1}(\bcw{U}))$,
% to a section supported outside of $\bc{\mu}^{-1}(\bc{f}^{-1}(\bcw{U}))$, and so outside of $\bcw{\pr}_2^{-1}(\bc{f}^{-1}(\bcw{U}))$,
therefore $\alpha(\bcpb{\mu}\bcw{\cla{N}})\subseteq \bcwpb{\pr}_2\bcw{\cla{N}}$ as required. 

For closed subschemes, 
the quotient $\cla{O}_X/\cla{I}_Z$ is a $G$-sheaf, and  so by the above is its pure transform $\cla{O}_{\bcw{X}}/\cla{I}_{\bcw{Z}}$. Thus  $\bcw{Z}$ is a $G$-stable closed subscheme of $\bcw{X}$.
\end{proof}

\begin{lem} \label{propertranspreimage}\label{stricttransformflat}
$\bcw{\cla{M}}^{pt}=
\bcw{\cla{M}}$ 
(resp.~$\tilde Z=\bc{u}^{-1}(Z)$) 
if  $\cla{M}$ is flat over $Y$ 
(resp.~$f|_Z$ is flat)
and $\ass(\bcw{Y})\subseteq \bcw{U}$, which holds for example 
if $\bcw{Y}$ is integral.
\end{lem}

\begin{proof}
If  $\bcw{\cla{M}}$ is flat, associated points of $\bcw{\cla{M}}$ map to associated points of~$\bcw{Y}$ \cite[Th.~3.3.1]{EGAIV}, hence $\ass(\bcw{\cla{M}})\subseteq \bc{f}^{-1}(\bcw{U})$.
Flatness of $f|_Z$ is equivalent to flatness of $\cla{O}_X/\cla{I}_{Z}$. The pure transform of $\cla{O}_X/\cla{I}_{Z}$ with respect to $U$ is then $\bcpb{u}(\cla{O}_X/\cla{I}_{Z})=\cla{O}_{\bcw{X}}/\cla{I}_{\bc{u}^{-1}(Z)}$, 
and at the same time $\cla{O}_{\bcw{X}}/\cla{I}_{\bcw{Z}}$. It follows that $\bcw{Z}=\bc{u}^{-1}(Z)$.
\end{proof}

\begin{defn}
A projective $G$-morphism $u:\,\bcw{Y}\to Y$  is called
an \emph{equivariant flattening} of~$\cla{M}$ 
(resp.~of $Z$) if $\bcw{\cla{M}}^{pt}$ is flat over $\bcw{Y}$
(resp.~$\bc{f}|_{\bcw{Z}}$ is flat).
\end{defn}

\begin{conj}
An equivariant flattening always exists.
\end{conj}

We prove existence of an equivariant flattening under an additional assumption that
$f$ is a {\em projective} morphism, which we will assume from now on.
Let $\cl{Quot}_{\cla{M}/X/Y}$ be the Quot functor, i.e. the contravariant functor $\mathsf{Sch}_Y\rightarrow \mathsf{Set}$ such that
$$
\cl{Quot}_{\cla{M}/X/Y}(T)=\crb{\begin{array}{c} \text{Coherent quotients of the pullback of} \\ \cla{M}\text{ to }T\times_Y X \text{ that are flat over }T \end{array}}.
$$
It is represented by the Quot scheme $\mathrm{Q}$,
% (when $Y$ is noetherian, $f$ is projective, and $\cla{M}$ coherent). It is 
a disjoint union $\coprod_i \mathrm{Q}_i$ of projective schemes $\mathrm{Q}_i$ over $Y$ (see \cite{TDTE}).
% from \emph{Fondements de la G\'{e}ometrie Alg\'{e}brique} or \cite{N}). 
Write $\pi:\mathrm{Q}\rightarrow Y$ for the structure morphism. 
$\mathrm{Q}$ can be viewed as a scheme over $S$ via the composition of $\pi$ with $Y\rightarrow S$.

\begin{lem} \label{quotgstruct}
$\mathrm{Q}$ has a natural structure of a $G$-scheme and %, with which 
$\pi$ is a $G$-morphism.
\end{lem}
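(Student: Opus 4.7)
The plan is to use the functor of points, constructing the action $G\times_S \mathrm{Q}\rightarrow \mathrm{Q}$ via the universal property of the Quot scheme rather than by direct geometric manipulation. By Yoneda, it suffices to exhibit, in a natural way for every $S$-scheme $T$, a map
$$
G(T)\times \mathrm{Q}(T)\rightarrow \mathrm{Q}(T),
$$
where a $T$-point of $\mathrm{Q}$ is a pair $(\phi,q)$ consisting of a morphism $\phi:T\rightarrow Y$ over $S$ and a coherent quotient $q:\phi^{\ast}\cla{M}\surj \cla{F}$ on $T\times_{\phi,Y,f} X$ that is flat over $T$. Given $g\in G(T)$, the new structure morphism is $g\cdot\phi:=\mu_Y\circ(g,\phi):T\rightarrow Y$.

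To transport the quotient, I would use the $G$-equivariance of $f$ and of $\cla{M}$. Because $f$ is a $G$-morphism, the map $(t,x)\mapsto (t,g(t)\cdot x)$ is an isomorphism
$$
T\times_{\phi,Y,f} X \isom T\times_{g\phi,Y,f} X,
$$
and under it the projection to $X$ changes by $\mu_X\circ (g,\pr_X)$. The $G$-structure on $\cla{M}$ is exactly an isomorphism $\mu_X^{\ast}\cla{M}\isom \pr_2^{\ast}\cla{M}$ on $G\times_S X$, and pulling this back along $(g,\pr_X)$ gives a canonical identification between the pullbacks of $\cla{M}$ from $X$ via the two structures. Transporting $\cla{F}$ through the fiber-product isomorphism and then rewriting via this identification produces a coherent quotient of $(g\phi)^{\ast}\cla{M}$ on $T\times_{g\phi,Y,f} X$, which is flat over $T$ since flatness is preserved by pullback and by isomorphism. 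This defines the map $G(T)\times \mathrm{Q}(T)\rightarrow \mathrm{Q}(T)$, functorially in $T$, hence a morphism $\mathrm{Q}\times_S G\rightarrow \mathrm{Q}$ of $S$-schemes. Equivalently, one can describe the same morphism globally: pull back the universal quotient on $\mathrm{Q}\times_Y X$ to $G\times_S(\mathrm{Q}\times_Y X)$, use the isomorphism $(G\times_S \mathrm{Q})\times_{\sigma,Y,f} X \cong G\times_S(\mathrm{Q}\times_{\pi,Y,f} X)$ with $\sigma=\mu_Y\circ(\mathrm{id}\times\pi)$, and twist by the $G$-structure on $\cla{M}$ to view this as a quotient of the pullback of $\cla{M}$; this quotient classifies $\sigma$ together with the desired morphism $G\times_S \mathrm{Q}\rightarrow \mathrm{Q}$.

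It remains to check the group action axioms and equivariance of $\pi$. The identity axiom follows because the identity section of $G$ acts trivially on $X$ and the $G$-structure on $\cla{M}$ restricts to the identity on the fiber over $e$. Associativity reduces, by the faithfulness of Yoneda, to the cocycle condition for the $G$-structure on $\cla{M}$ combined with the associativity of the $G$-action on $X$; both are given. Finally, $\pi$ is $G$-equivariant essentially by construction, since the $T$-point $(g,(\phi,q))$ is sent to a $T$-point with structure morphism $g\cdot\phi$, i.e. $\pi(g\cdot(\phi,q))=g\cdot\pi(\phi,q)$. I expect the main technical obstacle to be the careful bookkeeping of the pullbacks and the compatibility between the isomorphism of fiber products and the $G$-structure isomorphism $\mu_X^{\ast}\cla{M}\cong\pr_2^{\ast}\cla{M}$; once the identifications are correctly set up, the group axioms are a formal consequence of those on $X$ and $\cla{M}$.
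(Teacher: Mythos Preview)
Your proposal is correct and follows essentially the same approach as the paper: both use the functor of points to define the $G$-action on $\mathrm{Q}$ by twisting the structure map $T\to Y$ by $g$ and transporting the quotient via the $G$-sheaf isomorphism on $\cla{M}$, then read off the equivariance of $\pi$ directly from the construction. Your write-up is a bit more explicit (you spell out the fiber-product isomorphism $(t,x)\mapsto(t,g(t)x)$ and invoke the cocycle condition for associativity), whereas the paper keeps the underlying scheme $T\times_Y X$ fixed and just changes the map to $X$ from $\tilde y$ to $\widetilde{gy}$, omitting the verification of the action axioms as routine.
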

\begin{proof}
Given a scheme $T$, we define an action of $G_S(T)$ on $\mathrm{Q}_S(T)$, functorial in~$T$, as follows. Let $g\in G_S(T)$ and $s\in \mathrm{Q}_S(T)$. %; we want to define an element $gs\in \mathrm{Q}_S(T)$. 
We view $T$ as a scheme over $Y$ via $y=\pi \circ s$, in which case $s$ is a morphism over $Y$, and $y$ a morphism over $S$:
$$
\begindc{\commdiag}[45]
\obj(8,0)[s]{$S$} \obj(8,7)[y]{$Y$} \obj(16,12)[q]{$\mathrm{Q}$} \obj(0,12)[t]{$T$}
\mor{t}{q}{$s$} \mor{t}{y}{$y$} \mor{q}{y}{$\pi$}[-1,0] \mor{y}{s}{} \mor{t}{s}{} \mor{q}{s}{}
\enddc
$$
%Then $s\in \mathrm{Q}_Y(T)=\cl{Quot}_{\cla{M}/X/Y}(T)$. In particular, 
The $Y$-morphism $s$ corresponds to a coherent quotient $\cla{N}$ of $\bcpb{y}\cla{M}$ flat over $T$:
\begin{equation}
\begindc{\commdiag}[40]
\obj(0,8)[xp]{$T\times_Y X$} \obj(0,0)[yp]{$T$} \obj(16,0)[y]{$Y$} \obj(16,8)[x]{$X$}
\mor{xp}{yp}{}[-1,0] \mor{xp}{x}{$\bc{y}$} \mor{yp}{y}{$y$} \mor{x}{y}{$f$}
\enddc\label{rgrgr}
\end{equation}

The morphism $T\times_Y X\rightarrow T$ induces a map $G_S(T)\rightarrow G_S(T\times_Y X)$. Let $\bc{g}$ be the image of $g$ under this map. Note that $\bc{y}\in X_S(T\times_Y X)$, so that $\bc{g}\bc{y}=\bcw{gy}$ is also an element in $X_S(T\times_Y X)$, where $\bcw{gy}$ is given by the cartesian diagram
$$
\begindc{\commdiag}[40]
\obj(0,8)[xp]{$T\times_Y X$} \obj(0,0)[yp]{$T$} \obj(16,0)[y]{$Y$} \obj(16,8)[x]{$X$}
\mor{xp}{yp}{}[-1,0] \mor{xp}{x}{$\bcw{gy}$} \mor{yp}{y}{$gy$} \mor{x}{y}{$f$}
\enddc
$$
(here $T\times_Y X$ and $T\times_Y X\rightarrow T$ are as in \eqref{rgrgr}).
Since $\cla{M}$ is a $G$-sheaf, there is an isomorphism of sheaves on $T\times_Y X$:
$$
\phi: \bcpb{y}\cla{M}\rightarrow \bcwpb{gy}\cla{M}
$$

The quotient sheaf $\cla{N}$ is identified via $\phi$ with a coherent quotient sheaf of $\bcwpb{gy}\cla{M}$ that is flat over $T$. This gives a point in $\mathrm{Q}_Y(T)\subseteq \mathrm{Q}_S(T)$, where $T$ is a scheme over $Y$ via $gy$. We define $gs$ to be this point. Showing the properties of a group action and functoriality on $T$ is easy and is omitted.
Finally, we show that $\pi$ is a $G$-morphism. Let $T$ be a scheme. Let $\pi_T:\mathrm{Q}_S(T)\rightarrow Y_S(T)$ be the map induced by $\pi$ on $T$-points, and let $g\in G_S(T)$, $s\in \mathrm{Q}_S(T)$. Let $y$ be the image of $s$ in $Y_S(T)$. From the definition of $gs$, $\pi_T(gs)=gy=g\pi_T(s)$, and so $\pi$ is a $G$-morphism.
\end{proof}

\begin{lem} \label{equivpbqpt}
Let $R$ be a $G$-scheme and $y:R\rightarrow Y$ a $G$-morphism 
such that the coherent sheaf $\bcpb{y}\cla{M}$ is flat over $R$:
$$
\begindc{\commdiag}[40]
\obj(0,8)[xp]{$X\times_Y R$} \obj(0,0)[yp]{$R$} \obj(16,0)[y]{$Y$} \obj(16,8)[x]{$X$}
\mor{xp}{yp}{}[-1,0] \mor{xp}{x}{$\bc{y}$} \mor{yp}{y}{$y$} \mor{x}{y}{$f$}
\enddc
$$
%This gives a point in $\cl{Quot}_{\cla{M}/X/Y}(R)$. 
Then the corresponding 
morphism $s:R\rightarrow \mathrm{Q}$ over $Y$
is  a $G$-morphism.
\end{lem}

\begin{proof}
%Let $T$ be a scheme, and 
For a scheme $T$, write $s_T:R_S(T)\rightarrow \mathrm{Q}_S(T)$ for the induced map on $T$-points. 
Given $g\in G_S(T)$ and $r\in R_S(T)$, we want to show that $s_T(gr)=gs_T(r)$. The image $s_T(r)$ is the point in $\mathrm{Q}_S(T)=\cl{Quot}_{\cla{M}/X/Y}(T)$  associated to the sheaf $\bcpb{r}\bcpb{y}\cla{M}=(\bc{y}\circ \bc{r})^{\ast}\cla{M}$ on $X\times_Y T$ %(which is the coherent quotient of $(\bc{y}\circ \bc{r})^{\ast}\cla{M}$ by the zero sheaf, and is 
flat over $T$:
$$
\begindc{\commdiag}[40]
\obj(-16,8)[xr]{$T\times_Y X$} \obj(-16,0)[yr]{$T$}\obj(0,8)[xp]{$X\times_Y R$} \obj(0,0)[yp]{$R$} \obj(16,0)[y]{$Y$} \obj(16,8)[x]{$X$}
\mor{xp}{yp}{}[-1,0] \mor{xp}{x}{$\bc{y}$} \mor{yp}{y}{$y$} \mor{x}{y}{$f$}
\mor{yr}{yp}{$r$} \mor{xr}{xp}{$\bc{r}$} \mor{xr}{yr}{}
\enddc
$$

Let $\bc{g}\in G_S(T\times_Y X)$ be the image of $g$ under the map $G_S(T)\rightarrow G_S(T\times_Y X)$ induced by $T\times_Y X\rightarrow T$. Note that $\bc{y}\circ \bc{r}\in X_S(T\times_Y X)$ and, as in the proof of Lemma \ref{quotgstruct}, $\bc{g}(\bc{y}\circ \bc{r})=\brk{g(y\circ r)}^{\sim}=\brk{y\circ gr}^{\sim}$:
$$
\begindc{\commdiag}[40]
\obj(0,8)[xp]{$T\times_Y X$} \obj(0,0)[yp]{$T$} \obj(16,0)[y]{$Y$} \obj(16,8)[x]{$X$}
\mor{xp}{yp}{}[-1,0] \mor{xp}{x}{$\bcw{y\circ gr}$} \mor{yp}{y}{$y\circ gr$} \mor{x}{y}{$f$}
\enddc
$$
The equality $g(y\circ r)=y\circ gr$ follows from the equivariance of $y$. There is an isomorphism of sheaves on $T\times_Y X$:
$$
\phi: \brk{\bc{y}\circ \bc{r}}^{\ast}\cla{M}\rightarrow \bcwpb{y\circ gr}\cla{M}
$$
%The sheaf $(\bc{y}\circ \bc{r})^{\ast}\cla{M}$ (as a quotient of $(\bc{y}\circ \bc{r})^{\ast}\cla{M}$ by the zero sheaf) 
%is identified with the sheaf $((y\circ gr)^{\sim})^{\ast}\cla{M}$. 
This is a coherent sheaf, flat over $T$, that determines the point $gs_T(r)$ in $\mathrm{Q}_S(T)$.

The image $s_T(gr)$ is the point in $\mathrm{Q}_S(T)$ associated to the sheaf $((y\circ gr)^{\sim})^{\ast}\cla{M}$ on $X\times_Y T$.% (which is the coherent quotient of $((y\circ gr)^{\sim})^{\ast}\cla{M}$ by the zero sheaf, and is flat over $T$). 
This is precisely $gs_T(r)$, thus $s$ is a $G$-morphism.
\end{proof}

\begin{thm} \label{equivflat}\label{eqclosedflat} \label{rgflat}\label{closedflat}
An equivariant flattening of $\cla{M}$ (resp.~of $Z$) exists for any 
projective $G$-morphism~$f:\,X\to Y$. If~$Y$ is integral then we can assume that 
$\bcw{Y}$ integral.
\end{thm}

\begin{proof}
When $G=S$ is a trivial group scheme, see \cite[Chap. 4, \S1, Thm. 1]{R}.
In general, we follow the same strategy.
By Lemma \ref{equivpbqpt}, the sheaf $\cla{M}|_{f^{-1}(U)}$ induces a $G$-morphism $v:U\rightarrow \mathrm{Q}$ over~$Y$. 
Let $\bcw{Y}$ be the scheme-theoretic image of $v$, a $G$-stable closed subscheme of $\mathrm{Q}$, and let $w:U\rightarrow \bcw{Y}$ be the induced $G$-morphism, and $s:\bcw{Y}\emb \mathrm{Q}$ the associated closed $G$-embedding. Write $u:\bcw{Y}\rightarrow Y$ for the structure morphism.
$$
\begindc{\commdiag}[50]
\obj(0,12)[u]{$U$} \obj(8,7)[y]{$\bcw{Y}$} \obj(16,12)[x]{$\mathrm{Q}$} \obj(8,0)[s]{$Y$} 
\obj(8,1)[ghost1]{} \obj(8,8)[ghost2]{}
\mor{u}{y}{$w$} \mor{u}{x}{$v$} \mor{y}{x}{$s$}[1,5] \mor{u}{s}{}[-1,5] \mor{y}{s}{} \mor{x}{s}{$\pi$} \mor{ghost1}{ghost2}{$u$}[0,-1]
\enddc
$$

We claim that $u$ is an equivariant flattening of $\cla{M}$. Since $Y$ is noetherian, $U$ has finitely many irreducible components, and the same holds for its image $\bcw{Y}$. Therefore $\bcw{Y}$ lies in finitely many irreducible components of $\mathrm{Q}$, so that $u$ is projective. If in addition $Y$ is integral, there is only one irreducible component, and so $\bcw{Y}$ is integral. Furthermore, $u=\pi\circ s$ is a $G$-morphism.
The composition $u\circ w$ is the open $G$-embedding $U\emb Y$, hence $w$ is also an open $G$-embedding. Let $\bcw{U}=w(U)$, which is a $G$-stable open set in $\bcw{Y}$. 
%The structure morphism $u:\bcw{Y}\rightarrow Y$ is birational, restricting to an isomorphism $\bcw{U}\isom U$. 
In summary, $u$ is a projective birational $G$-morphism, and it restricts to an isomorphism on $G$-stable open sets $\bcw{U}\isom U$.
%If $Y$ is integral, we may further assume that $\bcw{Y}$ is integral.
%We show that the pure transform of $\cla{M}$ is a $G$-sheaf that is flat over $\bcw{Y}$. 
%It is a $G$-sheaf from Proposition \ref{puretransformequiv}.
The morphism $s:\bcw{Y}\emb \mathrm{Q}$ %is an element of $\mathrm{Q}_Y(\bcw{Y})=\cl{Quot}_{\cla{M},X,Y}(\bcw{Y})$ and it
corresponds to a quotient sheaf $\cla{P}=\bcw{\cla{M}}/\bcw{\cla{N}}$ on $\bcw{X}=X\times_Y \bcw{Y}$, where %$\bcw{\cla{M}}=
$\bcpb{u}$ is defined by diagram \eqref{sefvwerv}.
%\cla{M}$, that is coherent and flat over $\bcw{Y}$ as in diagram~\eqref{sefvwerv}.
We will show that $\cla{P}$ is the pure transform of $\cla{M}$.

The morphism $w:U\emb \bcw{Y}$ %(over $Y$) 
induces a map $\mathrm{Q}_Y(\bcw{Y})\rightarrow \mathrm{Q}_Y(U)$
that %. In terms of elements of the set $\cl{Quot}_{\cla{M},X,Y}(\bcw{Y})$, this map 
sends a coherent quotient of the pullback of $\cla{M}$ on $X\times_Y \bcw{Y}$ that is flat over $\bcw{Y}$ to its pullback on $f^{-1}(U)$, which is  a coherent quotient of $\cla{M}|_{f^{-1}(U)}$ that is flat over $U$.
%$$\begindc{\commdiag}[40]
%\obj(-16,8)[xpp]{$f^{-1}(U)$} \obj(-16,0)[t]{$U$} \obj(0,8)[xp]{$X\times_Y\bcw{Y}$} \obj(0,0)[u]{$\bcw{Y}$} \obj(16,0)[y]{$Y$} \obj(16,8)[x]{$X$}
%\mor{xpp}{xp}{$\bc{w}$}[0,5] \mor{xpp}{t}{$f|_{f^{-1}(U)}$}[-1,0] \mor{t}{u}{$w$}[0,5] \mor{x}{y}{$f$} \mor{xp}{x}{} \mor{xp}{u}{} \mor{u}{y}{}
%\enddc
%$$
Thus the image of $s$ in $\mathrm{Q}_Y(U)$, which is $w\circ s=v$, corresponds to $\bcpb{w}\cla{P}=\cla{M}|_{f^{-1}(U)}$.
As an open immersion, $w$ is flat, hence
$$
\bcpb{w}\cla{P}=\bcpb{w}\bcw{\cla{M}}/\bcpb{w}\bcw{\cla{N}} = (\bcpb{w}\bcpb{u}\cla{M})/\bcpb{w}\bcw{\cla{N}} = 
\cla{M}|_{f^{-1}(U)}/ \bcpb{w}\bcw{\cla{N}}.
$$
%Thus $\cla{M}|_U=\cla{M}|_U/ \bcpb{w}\bcw{\cla{N}}$ and 
We deduce that $\bcpb{w}\bcw{\cla{N}}$ is the zero sheaf. Taking the pullback of $\bcpb{w}\bcw{\cla{N}}$ by the isomorphism $\bc{u}|_{\bc{f}^{-1}(\bcw{U})}:\bc{f}^{-1}(\bcw{U})\isom f^{-1}(U)$, we see that %$\bcw{\cla{N}}|_{\bcw{U}}=0$, i.e. 
$\bcw{\cla{N}}$ vanishes on $\bc{f}^{-1}(\bcw{U})$.
%$$
%\begindc{\commdiag}[60]
%\obj(0,8)[gu]{$\bc{f}^{-1}(\bcw{U})$} \obj(0,0)[gx]{$\bcw{U}$} \obj(16,0)[x]{$U$} \obj(16,8)[u]{$f^{-1}(U)$}
%\obj(32,0)[y]{$\bcw{Y}$} \obj(32,8)[xt]{$\bcw{X}$}
%\mor{gu}{gx}{$\bc{f}|_{\bc{f}^{-1}(\bcw{U})}$}[-1,0] \mor{gu}{u}{$\bc{u}|_{\bc{f}^{-1}(\bcw{U})}$} \mor{gx}{x}
%{$u|_{\bcw{U}}$} \mor{u}{x}{$f|_{f^{-1}(U)}$} \mor{u}{xt}{$\bc{w}$}[0,5] \mor{x}{y}{$w$}[0,5] \mor{xt}{y}{$\bc{f}$}
%\enddc
%$$
Since $\bcw{Y}$ is the scheme-theoretic image of $U$, its associated points are contained in $\bcw{U}$.
%The associated points of $\bcw{Y}$ are contained in $\bcw{U}$. Indeed, assume there is $P\in \ass(\bcw{Y})$ 
%with $P\not\in \bcw{U}$. Pick some affine open set containing $P$, say $\bcw{V}=\mathrm{Spec}\,A$, 
%and let $\cl{p}\subset A$ be the prime ideal associated to $P$. It is an associated prime of $A$, 
%that is $\cl{p}=\mathrm{Ann}(a)$ for some (non-zero) $a\in A$. 
%The support of $a$ is the closure of $\cl{p}$ (in $\bcw{V}$), which is $V(\cl{p})$:
%$$\cl{q}\in \supp a \eqvs a\neq 0\text{ in }A_{\cl{q}} \eqvs \cl{q}\supseteq \mathrm{Ann}\,(a) \eqvs \cl{q}\in V(\cl{p}).$$
%Since $P\in\bcw{Y}-\bcw{U}$, $V(\cl{p})$ is contained in $\bcw{Y}-\bcw{U}$, 
%and so $a\in \Gamma(\bcw{V},\cla{O}_{\bcw{Y}})$ is a non-zero section supported outside of $\bcw{U}$. 
%In particular, the subsheaf of $\cla{O}_{\bcw{Y}}$ consisting of sections with support on $\bcw{Y}-\bcw{U}$ is not empty, 
%and it  corresponds to a closed subscheme $\bcw{Y}'\subset\bcw{Y}\subseteq \mathrm{Q}$ (strict inclusion) 
%containing $U$. This violates the minimality of the scheme-theoretic image $\bcw{Y}$.
Due to the flatness of $\cla{P}$, the associated points of $\cla{P}$ map to associated points of $\bcw{Y}$ \cite[Th.~3.3.1]{EGAIV}. We deduce $\ass(\cla{P})\subseteq \bc{f}^{-1}(\bcw{U})$, and Definition \ref{stricttransformchar} implies that $\cla{P}$, which is flat over $\bcw{Y}$, is the pure transform of $\cla{M}$.
For a $G$-stable closed subscheme $Z\subset X$,
the proof is the same using Lemma~\ref{puretransagree}.
\end{proof}

To prove existence of tropical compactifications in the next subsection, we 
need to prove existence of equivariant flattening of the multiplication map.

\begin{cor}\label{mainthmoverk}
Let $X$ be an integral $G$-scheme of finite type
and 
let $\cla{M}$ be a coherent sheaf on $X$.
Consider a coherent sheaf 
$\cla{N}=\pr_2^{\ast}\cla{M}$ on $G\times_S X$.
% (without a $G$-sheaf structure).
Suppose that
%$\cla{N}|_{G\times_SU}$ is flat over $U$ via $\mu$ and that 
$G$ admits 
an equivariant compactification $G\emb G'$  in a projective scheme $G'$\footnote{
The equivariant compactification $G'$ exists if 
$G$ is a surjective smooth affine group scheme with connected fibers over a normal noetherian scheme $S$
\cite[Thm.~4.9]{Su} or if $G$ is a linear algebraic group over 
$S=\spec k$ for an algebraically closed field $k$  \cite[Thm.~3]{Suv}.}.
Then there exists a projective birational $G$-morphism
$u:\bcw{X}\rightarrow X$ 
from an integral scheme $\bcw{X}$ such that the $G$-sheaf 
$\bcw{\cla{N}}^{pt}$  on $G\times_S \bcw{X}$ (defined in the proof) is flat over $\bcw{X}$ via $\bcw{\mu}$:
%an isomorphism on $G$-stable open dense sets $\bcw{U}\isom U$,
%which flattens $\cla{N}$% with respect to $(\mu,U)$:
\begin{equation}\label{dfbDfhdh}
\begindc{\commdiag}[40]
\obj(0,8)[gu]{$G\times_S \bcw{X}$} \obj(0,0)[gx]{$\bcw{X}$} \obj(16,0)[x]{$X$} \obj(16,8)[u]{$G\times_S X$}
\mor{gu}{gx}{$\bc{\mu}$}[-1,0] \mor{gu}{u}{$\bc{u}$} \mor{gx}{x}{$u$} \mor{u}{x}{$\mu$}
\enddc
\end{equation}
%Here $\bc{\mu}$ is the multiplication map of $\bcw{X}$. 
% If~$X$ is integral, we can assume that $\bcw{X}$ is integral.
\end{cor}

\begin{proof}
Let $G$ act on $G\times_S X$ by multiplication on the first factor, $g(h,x)=(gh,x)$, so that 
the multiplication map $\mu$  is a $G$-morphism.
Then $\cla{N}$
has a canonical  $G$-sheaf structure.
Let $G\times'_S X\simeq G\times_S X$ as a scheme but $G$ acts by multiplication on both factors, $g(h,x)=(gh,gx)$.
Consider the $G$-isomorphism $\phi$ given by
$$
\phi:G\times'_S X\isom G\times_S X,\quad (g,x)\mapsto (g,g^{-1}x).
$$
%where $G\times'_S X\simeq G\times_S X$ as a scheme but $G$ acts by multiplication on both factors, i.e.~$g(h,x)=(gh,gx)$,
%so that $\pr_2$ is a $G$-morphism.
The scheme $G\times'_S X$ is a $G$-stable open subset of $G'\times'_S X$. There exists %The $G$-sheaf $\phi^{\ast}\cla{N}$ extends to 
a coherent $G$-sheaf $\cla{P}$ on $G'\times'_S X$ such that $\cla{P}|_{G\times'_S X}=\phi^{\ast}\cla{N}$. Indeed, 
the pushforward of $\phi^{\ast}\cla{N}$ to $G'\times_S X$ is a quasi-coherent $G$-sheaf, which
is a
%can be written as the 
direct limit of its coherent $G$-subsheaves that restrict to $\phi^{\ast}\cla{N}$ on $G\times_S X$ (see \cite[Cor. 2.4]{Th}% , \cite[Lem. 1]{B}, 
or \cite[Cor. 15.5]{LM}).
The second projection $\pr_2:G'\times_S X\rightarrow X$ is projective as a base change of $G'\rightarrow S$. 
Since $X$ is integral, there exists a a $G$-invariant dense open subset
$U\subset X$ such that $\cla{P}|_{G'\times'_SU}$ is flat over $U$ via $\pr_2$.

A~morphism $u:\bcw{X}\rightarrow X$ is a $G$-flattening of $\cla{N}$ with respect to $(\mu,U)$ if and only if it is a $G$-flattening of $\phi^{\ast}\cla{N}$ with respect to $\pr_2=\mu\circ \phi$.  
By Theorem \ref{equivflat}, there is a $G$-flattening  $u:\bcw{X}\rightarrow X$ of $\cla{P}$ with integral $\bcw{X}$:
$$
\begindc{\commdiag}[40]
\obj(0,8)[gu]{$G'\times'_S \bcw{X}$} \obj(0,0)[gx]{$\bcw{X}$} \obj(16,0)[x]{$X$} \obj(16,8)[u]{$G'\times'_S X$}
\mor{gu}{gx}{$\bcw{\pr}_2$}[-1,0] \mor{gu}{u}{$\bc{u}$} \mor{gx}{x}{$u$} \mor{u}{x}{$\pr_2$}
\enddc
$$
%If $X$ is integral, we may assume that $\bcw{X}$ is also integral. 
Restriction to the $G$-stable open set $G\times'_S X$ shows that $u$ is a $G$-flattening of $\cla{P}|_{G\times'_S X}=\phi^{\ast}\cla{N}$ with respect to $\pr_2$, hence a $G$-flattening of $\cla{N}$ with respect to~$\mu$.
\end{proof}

\subsection{General Tropical Compactifications} \label{secttropcompactif}\label{sectflatmultiplmap}

In this subsection the base scheme $S$ is $\spec k$ 
for an algebraically closed field $k$, $G$  is a smooth linear
algebraic group over $k$ and $U$ is a homogeneous variety of $G$.
We fix a closed subscheme $Y\subseteq U$
and consider open dense $G$-embeddings $U\emb X$.

\begin{lem}\label{aefvweve}
The multiplication map 
$\mu_Y: G\times Y\rightarrow U$ is faithfully flat. 
\end{lem}

\begin{proof}
Consider the morphism
$$
\psi:\,G\times U\rightarrow U\times U,\quad (g,u)\mapsto (gu,u).
$$
Since $\psi$ is equidimensional and $G\times U$, $U\times U$ are smooth,
$\psi$ is flat \cite[6.1.5]{EGAIV}.
Thus its base change $G\times Y\rightarrow U\times Y$ is flat, and therefore
its composition with the projection $U\times Y\to U$, which is 
$\mu_Y$, is flat. It is clearly surjective. 
\end{proof}

\begin{defn}\renewcommand{\labelenumi}{(\roman{enumi})}
Let $X$ be a $G$-variety containing $U$ as a dense open subset.
The scheme-theoretic closure $\overline{Y}\subseteq X$ is called a \emph{tropical compactification} of $Y$
 if $\overline{Y}$ is proper and the multiplication map 
$ \mu_{\overline{Y}}:G\times \overline{Y}\rightarrow X$
 is faithfully flat.
\end{defn}

%\noindent Write $\bcw{Y}$ for the pure transform of $Y$.
\begin{prop} \label{puretransformmultagree}
Consider a projective birational $G$-morphism $u:X\rightarrow X'$.
Let $\overline{Y}'$ be the scheme-theoretic closure of $Y$ in $X'$. Consider a cartesian diagram
$$
\begindc{\commdiag}[40]
\obj(0,8)[gu]{$G\times X$} \obj(0,0)[gx]{$X$} \obj(16,0)[x]{$X'$} \obj(16,8)[u]{$G\times X'$}
\mor{gu}{gx}{$\mu$}[-1,0] \mor{gu}{u}{$\bc{u}$} \mor{gx}{x}{$u$} \mor{u}{x}{$\mu'$}
\enddc
$$
The pure transform of $G\times\overline{Y}'$ with respect to $(u,U)$ is $G\times\overline{Y}$.
\end{prop}

\begin{proof}
By Lemma~\ref{aefvweve}, the pure transform of $G\times\overline{Y}'$ is 
the closure of subscheme
$$
\bc{u}^{-1}\brk{(G\times\overline{Y}') \cap (G\times U)}= 
\bc{u}^{-1}\brk{G\times Y)}=G\times u^{-1}(Y)
$$
in $G\times X$, which is  $G\times \overline{Y}$.
\end{proof}

Given one tropical compactification, we can get more using the 
following:
\begin{prop}\label{tropcompbir}
Let $\overline{Y}\subseteq X$ be a tropical compactification of $Y$ and $u:\,\bcw{X}\rightarrow X$ a proper birational $G$-morphism
of varieties. 
Then the scheme-theoretic closure $\bcw{Y}$ of $Y$ in $\bcw{X}$ is a tropical compactification of $Y$
and is equal to $u^{-1}(\overline{Y})$.
\end{prop}

\begin{proof}
Let $\mu:G\times X\rightarrow X$ be the multiplication map of $X$. Consider the coherent sheaves $\cla{M}=\cla{O}_X/\cla{I}_{\overline{Y}}$ on $X$ and $\cla{N}=\pr_2^{\ast}\cla{M}=\cla{O}_{G\times X}/\cla{I}_{G\times\overline{Y}}$ on $G\times X$. 
Faithful flatness of $\mu_{\overline{Y}}$ is equivalent to faithful flatness of $\cla{N}$ with respect to $\mu$. 
The pure transform of $\cla{N}$ is $\bc{u}^{\ast}\cla{N}=\cla{O}_{G\times \bcw{X}}/\cla{I}_{G\times \bcw{Y}}$ (Lemmas \ref{stricttransformflat}, \ref{puretransagree} and Prop.~\ref{puretransformmultagree}), which is faithfully flat. This in turn is equivalent to faithful flatness of $\bc{\mu}:G\times \bcw{Y}\rightarrow \bcw{X}$. Furthermore, $\bcw{Y}=u^{-1}(\overline{Y})$ (Lemma \ref{propertranspreimage}), and so $\bcw{Y}$ is proper since $u$ is. Thus $\bcw{Y}\subseteq \bcw{X}$ is a tropical compactification.
\end{proof}

Using Prop.~\ref{tropcompbir}, we can partially order tropical compactifications under the relation
$\bcw{Y}\succeq \overline{Y}$ 
if there is a proper birational $G$-morphism $\bcw{X}\rightarrow X$.
This is an analog of \cite[Prop. 2.5]{Te} in the toric case, where 
this ordering has a combinatorial meaning:  refinement of the fan of the toric variety $X$. 
In the next section we will see that the same holds for tropical compactifications in spherical varieties.

\begin{thm}\label{tropcompexist}\label{tropcompexistvar}
There exists a tropical compactification $\overline{Y}$ in a normal $G$-variety~$X$.
\end{thm}

\begin{proof}
By \cite[Thm. 3]{Suv}, we can find an equivariant compactification
$U\emb X'$  in a $G$-variety $X'$.
Let $\overline{Y}'\subseteq X'$ be the scheme-theoretic closure of $Y$. 
We claim that we can find a projective birational $G$-morphism $u:\,X\rightarrow X'$ 
such that the multiplication map $\bc{\mu}_{\overline{Y}}:G\times \overline{Y} \rightarrow X$ is flat, where $\overline{Y}$ is the scheme-theoretic closure of $Y$ in~$X$.
Indeed, 
consider the coherent $G$-sheaf $\cla{O}_{G\times X'}/\cla{I}_{G\times \overline{Y}'}=\pr_2^{\ast}(\cla{O}_{X'}/\cla{I}_{\overline{Y}'})$. 
Flatness of $\mu_Y$ over $U$ (Lemma~\ref{aefvweve}) 
is equivalent to flatness of $(\cla{O}_{G\times X'}/\cla{I}_{G\times \overline{Y}'})|_{G\times U}$ via $\mu$.
%,and so of $(\cla{O}_X/\cla{I}_Y)|_U$. 
By \cite[Thm. 3]{Suv}, we can find an equivariant compactification
$G\emb G'$.
Apply Corollary \ref{mainthmoverk} to get an equivariant flattening $u:\,X\rightarrow X'$ of 
$\cla{O}_{G\times X'}/\cla{I}_{G\times\overline{Y}'}$:
$$
\begindc{\commdiag}[40]
\obj(0,8)[gu]{$G\times X$} \obj(0,0)[gx]{$X$} \obj(16,0)[x]{$X'$} \obj(16,8)[u]{$G\times X'$}
\mor{gu}{gx}{$\mu$}[-1,0] \mor{gu}{u}{$\bc{u}$} \mor{gx}{x}{$u$} \mor{u}{x}{$\mu'$}
\enddc
$$
%If $X$ is integral, we may assume that $\bcw{X}$ is also integral. 
%By definition, this is a $G$-flattening of $\cla{O}_{G\times_S X}/\cla{I}_{G\times_S Y}$ with respect to $\mu$. 
Flatness of the pure transform of $\cla{O}_{G\times X'}/\cla{I}_{G\times\overline{Y}'}$, 
which is $\cla{O}_{G\times X}/\cla{I}_{G\times \overline{Y}}$ by Lemma \ref{puretransagree} and Proposition \ref{puretransformmultagree}, is equivalent to flatness of $\mu_{\overline{Y}}:G\times \overline{Y}\rightarrow X$.%, where $\bcw{Y}$ is the pure transform of $Y$, and 

Using Proposition~\ref{tropcompbir}, we can assume that $X$ is normal.
Since $U\subset X'$ is an open $G$-orbit,
$u$ restricts to an isomorphism on $G$-stable open dense sets $U'\isom U$
and so we can view $X$ as an equivariant compactification of $U$.
%, i.e. the closure of $f^{-1}(\overline{Y}\cap U)=f^{-1}(Y)$ in $\bcw{X}$. 
Since $u$ is projective and $X'$ is proper, $X$ is also proper, and so is $\overline{Y}$. 

Finally, since $\mu_{\overline{Y}}$ is flat, its image is open in $X$. 
To make $\mu_{\overline{Y}}$ faithfully flat (i.e.~flat and surjective),
it remains to substitute $X$ with the image of $\mu_{\overline{Y}}$.
\end{proof}

%----------------------------------------------------------------------------------------------------------------------------------------------
\section{Compactifications of Subvarieties of Spherical Homogenous Spaces} \label{sphericaltrop}

\subsection{Brief Survey of Spherical Varieties} \label{spherreview}

Let $\cla{X}=\hom(B,k^{\times})$ be the group of characters. Let $k(G/H)^{(B)}$ be the multiplicative group of $B$-semi-invariant rational functions on $G/H$. It
comes with a homomorphism
$
\Xi:\,k(G/H)^{(B)}\rightarrow \cla{X}$,
$f\mapsto \chi_f$:
$$
k(G/H)^{(B)}=\crb{f\in k(G/H)^{\times} \text{ such that  }gf=\chi_f(g)f\text{ for all }\,g\in B}.
$$
%where $G$ and $B$ act on $k(G/H)$ by left translations, i.e. if $g\in G$ and $f\in k(G/H)$, 
%$gf(x)=f(g^{-1}x)$ for all $x$ such that $g^{-1}x$ is in the domain of $f$. 
The kernel of $\Xi$ is the set of constant functions, hence $k(G/H)^{(B)}/k^{\times}$ injects in~$\cla{X}$. 
The image $\Lambda\subset\cla{X}$ is called the weight lattice.
%its image, a finitely generated free abelian subgroup of~$\cla{X}$. 
Its rank is called the rank of $G/H$.
%\simeq\Lambda^{\vee}\otimes_{\zz}\qq$, where $\Lambda^{\vee}$ is the dual lattice of $\Lambda$. 
Any $\qq$-valuation of $k(G/H)$ trivial on $k$ can be restricted to $k(G/H)^{(B)}$ and 
induces a homomorphism $\Lambda\rightarrow \qq$, i.e. an element of $\mathrm{Q}=\hom(\Lambda,\qq)$.
This gives a map
$$
\varrho:\crb{\qq\text{-valuations of }k(G/H)}\rightarrow \mathrm{Q}.
$$
Restricting to geometric valuations, we can 
view $\varrho$ as a map from the set of prime divisors $D$ of $G/H$ 
(or its birational model), 
sending $D$ to $\varrho(v_D)$, where $v_D$ is the corresponding valuation.
Denote by $\cls{V}$ the set of $G$-invariant valuations of $k(G/H)$. %, i.e.~valuations $v$ such that $v(gf)=v(f)$ for all $g\in G$.
These valuations are automatically geometric.  
Then $\varrho$ restricts to an injection on~$\cls{V}$ and
we identify $\cls{V}$ with its image in $\mathrm{Q}$. This 
is a rational convex polyhedral cone,
called the \emph{valuation cone}. 
Let $\cls{D}$ be the finite set of $B$-stable prime divisors of $G/H$. 
%It is a finite set, since $B$ has an open dense orbit in $G/H$. 
The elements of $\cls{D}$ are called \emph{colors}.
\begin{defn}
A %(\emph{strictly convex}) 
colored cone is a pair $(\cls{C},\cls{F})$, where $\cls{F}\subseteq \cls{D}$ is a subset of colors and $\cls{C}\subseteq \mathrm{Q}$ 
is a strictly convex cone 
generated by $\varrho(\cls{F})$ and finitely many elements of~$\cls{V}$. Furthermore, 
$\cls{C}^{\circ}\cap\cls{V}\ne\emptyset$ and
$0\not\in\varrho(\cls{F})$, where we denote by $\cls{C}^{\circ}$ the relative interior.

A face of a colored cone $(\cls{C},\cls{F})$ is a colored cone $(\cls{C}_0,\cls{F}_0)$ such that $\cls{C}_0$ is a face of $\cls{C}$ that intersects $\cls{V}$ non-trivially and $\cls{F}_0=\cls{F}\cap \varrho^{-1}(\cls{C}_0)$.

A colored fan $\cla{F}$ is a non-empty set of colored cones %$(\cls{C},\cls{F})$ 
such that
every face of a cone in $\cla{F}$ is in $\cla{F}$ and any element $v\in\cls{V}$ lies in the interior of at most one cone.
\end{defn}

A spherical variety is called \emph{simple} if it contains a unique closed $G$-orbit. Any spherical variety is covered 
by finitely many simple spherical open subvarieties.
There is a bijection
between spherical embeddings $G/H\hookrightarrow X$ and colored fans 
that restricts to a bijection
between simple spherical embeddings and colored cones.
Namely, let $X$ be a simple spherical variety with a unique closed $G$-orbit $Y$. 
Let $\cls{B}_X\subseteq \cls{V}$ be the set of $G$-stable prime divisors of $X$ containing $Y$, and let $\cls{F}_X$ 
be the set of $B$-stable prime divisors of $X$ containing $Y$ that are not $G$-stable. % (equivalently, the ones that intersect $G/H$). 
We identify any $D\in \cls{F}_X$ with the intersection $D\cap (G/H)$, which is a non-empty $B$-stable prime divisor of $G/H$, i.e.~a color. 
%Thus we can view $\cls{F}$ as a subset of $\cls{D}$. 
Let $\cls{C}_X\subset \mathrm{Q}$ be the cone generated by $\cls{B}_X$ and $\varrho(\cls{F})_X$. 
Then $(\cls{C}_X,\cls{F}_X)$ is the colored cone associated to $X$.

\begin{defn} \label{toroiddef}
A spherical variety $X$ is called toroidal if  the associated colored fan has no colors, 
i.e. if $\cls{F}=\varnothing$ for any colored cone $(\cls{C},\cls{F})$ of the colored fan. 
%In~this case we will write $\cls{C}$ instead of $(\cls{C},\varnothing)$ for a colored cone in $\cla{F}$.
\end{defn}

For any spherical variety $X$, there is a surjective proper birational $G$-morphism $X'\surj X$, that restricts to the identity on $G/H$, with $X'$ a toroidal spherical variety. 

The \emph{support} of a colored fan $\cla{F}$ is the intersection of the union of its cones with~$\cls{V}$.
A spherical variety is complete if and only if the support of its colored fan is ~$\cls{V}$.
An \emph{equivariant compactification} of a spherical variety $X$ is a complete spherical variety $X'$ (for the same  space $G/H$) with an open dense $G$-embedding $X\emb X'$. 
%Given an equivariant compactification $X\emb X'$, we view $X$ as a $G$-stable open subset of $X'$.
%Given a spherical variety, 
As for toric varieties, one can  find an equivariant compactification $X'$ by completing the colored fan of~$X$.
If $X$ is toroidal, one may assume that $X'$ is toroidal as well. 

\begin{exam} \label{puncturedplane} \rm
Let $G=\sl{2}$ with Borel subgroup $B$ consisting of the upper triangular matrices. 
Consider the spherical homogeneous space $G/H=\aa^2-\{ 0\}$, where %$0$ is the origin, and the action is given by left multiplication 
%(the elements of $\aa^2-\crb{0}$ are viewed as column vectors). 
$
H=\crb{\twomat{1}{*}{0}{1}}
$.
%Let $g_{ij}$ be coordinates for $\sl{2}$, and $x,y$ coordinates for $\aa^{2}-\{0\}$. 
There are two $B$-orbits, an open orbit $O$ and a closed orbit $D$:
$$
O=\crb{\twovec{x}{y}\in \aa^2\,|\, y\neq 0},\quad D=\crb{\twovec{x}{0}\in \aa^2\,|\, x\neq 0}.
$$
%So $\aa^2-\crb{0}$ is a spherical homogeneous space.
We have $\cla{X}\simeq\zz$, namely  the character
$
\chi_n\twomat{a}{b}{0}{a^{-1}}= a^n
$
corresponds to $n\in \zz$.
Moreover, $k(x,y)^{(B)}$ consists of
$y^n$ for $n\in \zz$, up to multiplication by scalars. 
The character associated to $y^n$ is~$\chi_n$. Therefore $\Lambda =\cla{X}$, generated by $y$ or equivalently by $\chi_1$,
and $\mathrm{Q}=\hom(\Lambda,\qq)\simeq \qq$ is spanned by the function 
$\chi^{\ast}:\Lambda\rightarrow \qq$, $\chi^{\ast}(y)=1$.

Consider the following $G$-invariant valuations $v_{min}, v_{max}$ of $k(x,y)$.
If $p\in k[x,y]$ then  $v_{max}(p)=-\deg(p)$ and $v_{min}$ is the minimal degree
of a monomial in $p(x,y)$. %Both valuations are extended to  rational functions in the usual way. 
%This is a $G$-invariant valuation of $k(\aa^2-\crb{0})$, and so is $-v$. 
Since $v_{min}(y)=1$, $v_{max}(y)=-1$,
we have $\cls{V}=\mathrm{Q}$ and 
 $\varrho(v_{min})=-\varrho(v_{max})=\chi^{\ast}$.
 
The $B$-orbit $D$ is a unique color,
so  $\cls{D}=\{D\}$. The valuation $v_D$ measures the order of vanishing  along $D=\{y=0\}$. 
Thus $v_D(y)=1$, hence $\varrho(D)=\chi^{\ast}$.

Let $\cls{R}$ denote the cone in $\mathrm{Q}$ generated by $\chi^{\ast}$, and $-\cls{R}$ the opposite one.
There are four distinct non-trivial colored cones in $\mathrm{Q}$, and six colored fans. These fans are listed in Table \ref{sphembpunctplane}, along with their maximal cones, the corresponding spherical varieties, and their closed $G$-orbits. 
\begin{table}[hbtp]
\centering
\caption{Spherical varieties for the homogeneous space $\aa^2-\{0\}$}
\begin{tabular}{|c| c |c| c|}
\hline
Spherical variety & closed $G$-orbits& maximal colored cones & colored fan \\ 
\hline
\vspace{-10pt} & & & \\
$\aa^2-\{0\}$	& $\aa^2-\{0\}$& $(0,\varnothing)$ & {\includegraphics[width=80pt]{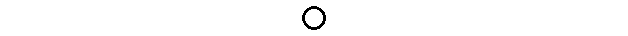}} \\ 
$\aa^2$ 		& $0$ 		& $(\cls{R},\cls{D})$ & {\includegraphics[width=80pt]{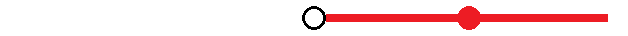}} \\
$\blow _0\aa^2$ 	& $E$ 		& $(\cls{R},\varnothing)$ & {\includegraphics[width=80pt]{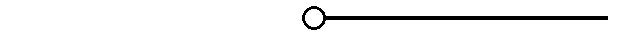}} \\
$\pp^2-\{0\}$	& $W$ 	& $(-\cls{R},\varnothing)$ & {\includegraphics[width=80pt]{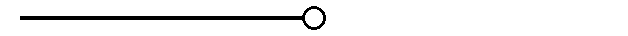}} \\
$\pp^2$		& $W$, $0$& $(\cls{R},\cls{D})$, $(-\cls{R},\varnothing)$ & {\includegraphics[width=80pt]{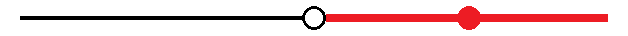}} \\
$\blow_0 \pp^2$	& $W$, $E$ & $(\cls{R},\varnothing)$, $(-\cls{R},\varnothing)$ & {\includegraphics[width=80pt]{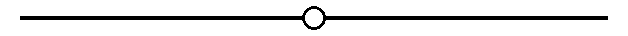}} \\ 
\hline
\end{tabular}
\label{sphembpunctplane}
\end{table}
One can see that the
colored cone $(\cls{R},\cls{D})$ adds a point at the origin, 
$(\cls{R},\varnothing)$ adds the exceptional divisor $E$ of the blowup of the plane at the origin, 
while $-\cls{R}$ adds  ``the line at infinity'' $W$.
The complete spherical varieties $\pp^2$ and $\blow_0\pp^2$ are supported on all of $\cls{V}$.

%Now we demonstrate how to find the fan of a spherical variety. Consider the projective space $\pp^2$ with homogeneous coordinates $W,X,Y$. Let $\aa^2-\crb{0}\emb \pp^2$  be the embedding of $\aa^2-\crb{0}$ in the affine plane $W\neq 0$ inside $\pp^2$, so that $x=X/W$ and $y=Y/W$. The action of $\sl{2}$ extends naturally to an action on all of $\pp^2$:
%$$\twomat{g_{11}}{g_{12}}{g_{21}}{g_{22}}(W:X:Y)=(W:g_{11}X+g_{12}Y:g_{21}X+g_{22}Y),$$
%for any $(g_{ij})\in\sl{2}$, $(W:X:Y)\in \pp^2$. Thus $\pp^2$ is a spherical variety.

%There are two closed $G$-orbits, the boundary of the affine plane on $\pp^2$ and the origin $0$:
%$$Y_1=\crb{(0:X:Y)\in \pp^2:X,Y\in k},\quad Y_2=\crb{(1:0:0)}.$$
%The orbit $Y_1$ contains ``limit points of $\aa^2-\crb{0}$ at infinity,'' while $Y_2$ contains a ``limit point at the origin.'' Each of them will give a cone in $\mathrm{Q}$. There is a unique $B$-stable prime divisor containing $Y_1$, namely $W=0$, which is $G$-stable. Let $v_1$ be the valuation associated to it. The rational function $y$ on $\aa^2-\crb{0}$ can be written as $Y/W$ in $k(\pp^2)=k(\aa^2-\crb{0})$, so that $v_1(y)=-1$, hence $v_{1}=-\chi^{\ast}$ in $\mathrm{Q}$. The colored cone associated to $Y_1$ is then $(-\cls{R},0)$. There is a unique $B$-stable prime divisor containing $Y_2$, which is $Y=0$, and is not $G$-stable. Its intersection with $\aa^2-\crb{0}$ is the prime divisor $y=0$, i.e. the color $D$. Thus, the colored cone associated to $Y_2$ is $(\cls{R},\cls{D})$. The fan of $\pp^2$ is then $\cla{F}=\crb{(0,\varnothing),(\cls{R},\cls{D}),(-\cls{R},\varnothing)}$.
\end{exam}

\subsection{Spherical Tropicalization} \label{amoeba}

There is a tropicalization map \cite[\S4]{LV}, \cite[\S24]{Ti}
\begin{equation}\label{sfdvsdfvw}
\trpz:\,(G/H)(\overline{K})\rightarrow \cls{V},\quad \gamma\mapsto v_{\gamma}
\end{equation}
defined as follows.
%from the $\overline{K}$-points of $G/H$ to the valuation cone $\cls{V}$. 
%The tropicalization of a subvariety $Y\subseteq G/H$ is the image of $Y(\overline{K})\subseteq G/H(\overline{K})$.
%Any $K$-point $\gamma\in G/H(K)$ defines a discrete valuation $v_{\gamma}:k(G/H)^{\times}\rightarrow \zz$, which is $G$-invariant.  
%Roughly speaking, a $K$-point  $\gamma:\spec K\rightarrow G/H$ is a germ of a formal curve  with a limit point in %a $G$-stable divisor of some spherical variety. % for the homogeneous space $G/H$. 
%The valuation $v_{\gamma}$ is the order of vanishing of the restriction of a rational function to the general translation of this curve at the limit point. More precisely,
Let %$\gamma:\spec K\rightarrow G/H$ be an element in $G/H(K)$. We want to define a discrete valuation $v_{\gamma}:k(G/H)^{\times}\rightarrow \zz$. Let 
$f\in k(G/H)$. 
%The domain of $f$ may not contain the image of~$\gamma$, but due to homogeneity, %for most $g\in G$, i.e. 
For a sufficiently general $g\in G$, 
the domain of $gf$ contains the image of~$\gamma$
and 
%i.e.~for $g$ from an open dense subset. 
the pullback $\gamma^{\ast}(gf)\in K$ is well-defined.
%, i.e. a Laurent series in $t$. 
%The value $\nu(\gamma^{\ast}(gf))$ may depend on the choice of $g$, but there is an open set of $G$ for which it is constant, and it increases on the complement. An element $g\in G$ from this open set is referred to as a \emph{sufficiently general element of} $G$. 
We have  
$$v_{\gamma}(f)=\{\nu(\gamma^{\ast}(gf))\quad\hbox{\rm for a sufficiently general}\ g\in G\}
=\min_{g\in G} \nu(\gamma^{\ast}(gf)).$$ 
%For an arbitrary $g\in G$, $\nu(\gamma^{\ast}(gf))\geq v_{\gamma}(f)$. % if  the domain of $gf$ contains $\im\gamma$.
Any $\overline{K}$-point $\gamma:\spec \overline{K}\rightarrow G/H$ factors through $\spec k((t^{1/n}))$ for some $n>0$. 
%Indeed, if $U=\spec A$ is an affine open set in $G/H$ containing the image of $\gamma$, then the restriction of $\gamma$ to a morphism $\spec \overline{K}\rightarrow U$ is induced by a $k$-algebra homomorphism $\gamma^{\ast}:A\rightarrow \overline{K}$. If $x_1,\ldots,x_m$ is a set of generators for $A$ (as a $k$-algebra), the images $\gamma^{\ast}(x_i)$ are Puiseux series and they all lie in some $k((1/n))$ for some $n\geq 0$. It follows that $\gamma^{\ast}$ factors through $k((t^{1/n}))$, so that $\spec \overline{K}\rightarrow U$ factors through $\spec k((t^{1/n}))$, and the same holds for $\gamma$. 
If $\tilde{\gamma}:\spec k((\tilde{t}))\rightarrow G/H$ is the induced morphism, where $\tilde{t}=t^{1/n}$, we define $v_{\gamma}=v_{\tilde{\gamma}}/n$. This gives a tropicalization map \eqref{sfdvsdfvw}, which is a surjection:
any $G$-invariant valuation of $G/H$ is a scalar multiple of some $v_\gamma$.

%Here is an equivalent way to calculate $v_{\gamma}(f)$.
%, given $\gamma:\spec K\rightarrow G/H$ and $f\in k(G/H)^{\times}$,  is the following. 
Equivalently, let %$k(G)$ be the field of rational functions on $G$, and let 
$L=k(G)((t))$ % be the field of Laurent series over $k(G)$. 
and consider the standard valuation
$$
\nu:L^{\times}\rightarrow \zz,\quad \sum_n c_nt^n\mapsto \min\{n:c_n\neq 0\}.
$$
%where the coefficients $c_n$ are in $k(G)$. 
Let $\psi_{\gamma}=\mu\circ \phi_{\gamma}$ be the morphism $\spec L\rightarrow G/H$ %, with $\phi_{\gamma}$ 
given by the diagram
$$
\begindc{\commdiag}[180]
\obj(2,4)[kg]{$\spec k(G)$} \obj(6,4)[g]{$G$} \obj(2,0)[k]{$\spec K$} \obj(6,0)[x]{$G/H$} \obj(0,2)[l]{$\spec L$} \obj(4,2)[gx]{$G\times G/H$} \obj(8,2)[gh]{$G/H$}
\mor{l}{kg}{} \mor{l}{k}{} \mor{kg}{g}{} \mor{k}{x}{$\gamma$} \mor{gx}{g}{$p_1$} \mor{gx}{x}{$p_2$}[-1,0] \mor{l}{gx}{$\phi_{\gamma}$}[1,1] \mor{gx}{gh}{$\mu$}
\enddc
$$
where %$\spec L\rightarrow \spec k(G)$ and $\spec L\rightarrow \spec K$ are the morphism induced by the inclusions of fields $k(G)\rightarrow L$ and $K\rightarrow L$, respectively, 
$\spec k(G)\rightarrow G$ is the generic point and 
$\mu$ is the multiplication map. Then % pullback $\psi_{\gamma}^{\ast}(f)$ is an element in $L$, i.e. a Laurent series with coefficients in $k((G))$. Then 
$$v_{\gamma}(f)=\nu (\psi_{\gamma}^{\ast}(f)).$$
%Roughly speaking, the pullback $\psi_{\gamma}^{\ast}(f)$ is the function $f$ along the curve defined by $\gamma$, permuted by an arbitrary element of $G$, which appears as parameters in the coefficients of the series. 
The extension to $\overline{K}$-points of $G/H$ in straightforward.

\begin{conj}
The map \eqref{sfdvsdfvw} 
extends to a continuous map from the Berkovich analytification $(G/H)^{an}$ to $Q\otimes{\Bbb R}$.
\end{conj}

%Now let $Y\subseteq G/H$ be a closed subvariety. The set of $\overline{K}$-points of $Y$ is a subset of $G/H(\overline{K})$.
\begin{defn}
The tropicalization of a closed subvariety $Y\subseteq G/H$ is 
$$\trop Y=\trpz(Y(\overline{K})).$$
\end{defn}

%\noindent We will see later that it is enough to find the set $val(Y(K))$. Multiplying this set by scalars in $\qq_{\geq 0}$ gives the rest of $\trop Y$.

\begin{exam} \label{puncturedplanetrop} \rm
%Let $G=\sl{2}$ and $G/H=\aa^2-\crb{0}$ be as 
We continue with notation of Example~\ref{puncturedplane} and %pick the same basis for $\mathrm{Q}$. We 
describe %the map $val$ and possible 
tropicalizations of curves in $\aa^2-\crb{0}$.
A $\overline{K}$-point $\gamma:\spec \overline{K}\rightarrow \aa^2-\{0\}$
corresponds to a homomorphism of $k$-algebras $\gamma^{\ast}:k[x,y]\rightarrow \overline{K}$ such that not both $x,y$ map to~$0$.
%Given such $\gamma:\spec \overline{K}\rightarrow \aa^2-\{0\}$, 
Write $x_{\gamma}, y_{\gamma}\in \overline{K}$ for the images of $x,y\in k[x,y]$.
%They are Puiseux series in $t$. Given $(g_{ij})\in \sl{2}$, we have
%$$\twomat{g_{11}}{g_{12}}{g_{21}}{g_{22}}^{-1}\twovec{x}{y} = \twovec{g_{22}x-g_{12}y}{-g_{21}x+g_{11}y}$$
%so that the rational function $gy$ is the sum of $x$ and $y$ with some (constant) coefficients. If $g=(g_{ij})$ is sufficiently general, the terms of $-g_{21}x$ and $g_{11}y$ with the lowest exponents in $t$ do not cancel, and therefore
%$$v_{\gamma}(y)=\nu(\gamma^{\ast}(gy))=\min \crb{\nu (x_{\gamma}),\nu (y_{\gamma})}.$$
We claim that
\begin{equation}\label{sdfvfv}
\trpz(\gamma)=c\chi^{\ast},\quad\hbox{\rm where}\ c=\min \crb{\nu (x_{\gamma}),\nu (y_{\gamma})}.
\end{equation}
Indeed, $k(G)=k(g_{ij})$ for $i,j=1,2$ %, $L=\bigcup_n k(g_{ij})((t^{1/n}))$, 
and the morphism $\psi_{\gamma}:\spec L\rightarrow \aa^2-\{0\}$ corresponds to the homomorphism of $k$-algebras:
$$
\psi_{\gamma}^{\ast}:k[x,y]\rightarrow L, \quad f(x,y)\mapsto f(g\cdot (x_{\gamma},y_{\gamma})),
$$
where
$$
g\cdot (x_{\gamma},y_{\gamma})=\twomat{g_{11}}{g_{12}}{g_{21}}{g_{22}}\twovec{x_{\gamma}}{y_{\gamma}}
= \twovec{g_{11}x_{\gamma}+g_{12}y_{\gamma}}{g_{21}x_{\gamma}+g_{22}y_{\gamma}}.
$$
The pullback $\psi_{\gamma}^{\ast}(y)$ is then $g_{21}x_{\gamma}+g_{22}y_{\gamma}$. In this expression, no term from $g_{21}x_{\gamma}$ cancels with a term from $g_{22}y_{\gamma}$, since they have distinct coefficients in $k(G)$. Thus
$$
v_{\gamma}(y)=\nu(\psi_{\gamma}^{\ast}(y))=\min \crb{\nu (x_{\gamma}),\nu (y_{\gamma})}.
$$
%and as before, $val(\gamma)=c\chi^{\ast}$, where $c=\min \crb{\nu (x_{\gamma}),\nu (y_{\gamma})}$.
Next, let $C$ be a curve in $\aa^2-\{0\}$ given by an equation
$$
f(x,y)=\sum_{n,m\ge0}c_{n,m}x^ny^m=0.
$$
A $\overline{K}$-point $\gamma:\spec \overline{K}\rightarrow \aa^2-\{0\}$ factors through $C$ precisely when the kernel 
of~$\gamma^{\ast}$ contains $f(x,y)$, i.e. if $f(x_{\gamma},y_{\gamma})=0$.
Write $f(x,y)=c_{0,0}+f_0(x,y)$. If $c_{0,0}\neq 0$, then $f(x_{\gamma},y_{\gamma})=0$ implies
$$
\min_{(n,m)} \crb{n\nu (x_{\gamma}) +m\nu (y_{\gamma})} \leq 0,
$$
where $(n,m)$ are the pairs of non-negative integers, not both of which are zero, such that $c_{n,m}\neq 0$. 
It is clear that one of $\nu (x_{\gamma})$ and $\nu (y_{\gamma})$ has to be non-positive, hence $v_{\gamma}(y)\leq 0$ and $\trpz(\gamma)=c\chi^{\ast}$ with $c\leq 0$. It follows that $\trop C$ is the ray $-\cls{R}$
%, i.e. the ray generated by $-\chi^{\ast}$ in $\mathrm{Q}$:

\begin{center}
\includegraphics[width=120pt]{fanpzero.png}
\end{center}

\noindent In case $c_{0,0}=0$, there is no restriction on $v_{\gamma}(f)$, and $\trop C$ is all of $\cls{V}=\mathrm{Q}$:

\begin{center}
\includegraphics[width=120pt]{fanblp.png}
\end{center}

\noindent In other words, the tropicalization of a curve passing through the origin of $\aa^2$ 
is all of~$\cls{V}$, while the tropicalization of a curve not passing through it is the ray $-\cls{R}$.
\end{exam}

%----------------------------------------------------------------------------------------------------------------------------------------------
\subsection{Spherical Tropical Compactifications} \label{tropcompactif}

%Let $k$ be an algebraically closed field, $G$ a connected reductive group over $k$, and $G/H$ a spherical homogeneous space for some closed subgroup $H\subseteq G$. We use standard notation on spherical varieties, which was introduced in \S\ref{sphericaltrop}. 
Let $Y\subseteq G/H$ be a closed subvariety of a spherical homogeneous space. 
%If $G/H\emb X$ is an open (dense) $G$-embedding and $X$ is a normal variety, then $X$ is a spherical variety. Hence 
Any tropical compactification of $Y$ in a normal variety occurs in a spherical variety. Our goal is to prove Theorem \ref{mainthmspherical}.

\begin{lem} \label{raytrop}\label{workink}\label{kptsameray}
If $v\in \trop Y$, $c\in \qq_{\geq 0}$ then $cv\in \trop Y$, in particular
$$\trop Y=\qq_{\geq 0}\trpz(Y(K)).$$
\end{lem}

\begin{proof}
Let $v=v_{\gamma}\in \trop Y$ for some $\gamma\in Y(\overline{K})$, and pick $c\in \qq_{\geq 0}$. 
There is a morphism $\phi:\spec\overline{K}\rightarrow \spec\overline{K}$ of $k$-schemes given by the endomorphism 
$$
\phi^{\ast}: \overline{K}\rightarrow \overline{K}, \quad f(t)\mapsto f(t^c)
$$
of a $k$-algebra.
Let $\bc{\gamma}=\gamma\circ \phi\in Y(\overline{K})$. 
%We claim that $v_{\bc{\gamma}}$, which is in $\trop Y$, is equal to $cv$.
Let $f\in k(G/H)^{\times}$, and let $g\in G$ be such that the domain of $gf$ contains $\im \gamma=\im \bc{\gamma}$. Then $\bcpb{\gamma}(gf)(t)=\gamma^{\ast}(gf)(t^c)$ and so
$$
\nu (\bcpb{\gamma}(gf))=c\,\nu(\gamma^{\ast}(gf)).
$$
%Since this is true for any $g\in G$ with $gf$ defined on $\im \gamma$, 
It follows that $v_{\bc{\gamma}}(f)=cv_{\gamma}(f)$, and hence $v_{\bc{\gamma}}=cv$.
\end{proof}

Let $R=k[[t]]$ be the ring of power series over $k$, a discrete valuation ring with field of fractions $K$. If $\gamma$ is a $K$-point of $G/H$ and $G/H\emb X$ a spherical embedding, then due to separatedness there is at most one morphism $\theta:\spec R\rightarrow X$ such that the following diagram commutes:
\begin{equation}\label{werfqer}
\begindc{\commdiag}[40]
\obj(0,8)[lar]{$\spec K$} \obj(0,0)[dvr]{$\spec R$} \obj(16,8)[xp]{$X$}
\mor{lar}{dvr}{} \mor{lar}{xp}{$\gamma$} \mor{dvr}{xp}{$\theta$}[-1,1]
\enddc
\end{equation}
%where $\gamma_{X}$ is the composition of $\gamma$ with $G/H\emb X$. 
If $\theta$ exists, write $x$ and $\xi$ for the images of the closed and the generic point of $\spec R$, respectively. 
The point $\xi\in G/H$ is the image of $\gamma$. The point $x$ is called the \emph{limit point} of $\gamma$ in $X$, denoted 
$\lim \gamma$. %It lies in the closure of $\xi$. 
We say that $\lim \gamma$ exists in $X$ if  $\theta$ exists. If $X\rightarrow X'$ is a $G$-morphism of spherical varieties
that fixes $G/H$ then the image of $\lim \gamma$ in~$X$ (if it exists) is $\lim \gamma$ in $X'$. This can be extended to $\overline{K}$-points of $G/H$, since any morphism $\spec \overline{K}\rightarrow G/H$ factors through $\spec k((t^{1/n}))$ for some $n\in \zz_{\geq 0}$.

%The following lemma, in a more general form, is in \cite[Sect. 4.8]{LV}, but we include the proof for completeness. 
%Given a cone $\cls{C}\subseteq\cls{V}$, we denote by $\cls{C}^{\circ}$ its relative interior.

\begin{lem} \label{limpt}
If $\cls{R}\subseteq \cls{V}$ is a ray, $X_R$ the associated toroidal simple spherical variety,
and $v_{\gamma}\in \cls{R}^{\circ}$ for  $\gamma\in G/H(K)$ then 
$\lim \gamma$ exists  and belongs to the closed $G$-orbit~$O$.
\end{lem}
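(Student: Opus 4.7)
The plan is to reduce to the complete case, apply the valuative criterion of properness to obtain a candidate limit, and then use the Luna-Vust dictionary between $G$-invariant valuations of $k(G/H)$ and $G$-orbits in spherical embeddings to identify this limit as a point of the closed orbit $O$.

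First I would complete the colored fan $\crb{(\cls{R},\varnothing)}$ of $X$ to a complete fan whose cones carry no colors and whose support equals $\cls{V}$. The associated spherical variety $X'$ is then toroidal, complete, contains $X$ as a $G$-stable open subset, and has $\cls{R}$ as one of its cones with associated $G$-orbit equal to $O$. Because $R$ is a discrete valuation ring with fraction field $K$ and $X'\to\spec k$ is proper, the valuative criterion of properness produces a unique extension $\theta:\spec R\to X'$ of the composition $\spec K\xrightarrow{\gamma}G/H\emb X'$. Writing $x=\theta(\text{closed point})\in X'$, the remaining task is to show $x\in O$.

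Let $O'\subseteq X'$ be the $G$-orbit containing $x$ and let $\cls{C}'$ be its colored cone in the fan of $X'$. The crux will be to invoke the standard Luna-Vust correspondence (cf.\ \cite[Sect.~4.8]{LV}): if a $K$-point $\gamma$ has a limit in a spherical embedding, then $v_{\gamma}$ lies in the relative interior of the cone attached to the orbit containing that limit. Granting this, $v_{\gamma}\in(\cls{C}')^{\circ}$; combined with the hypothesis $v_{\gamma}\in\cls{R}^{\circ}$ and the axiom that an element of $\cls{V}$ lies in the relative interior of at most one cone of a fan, we deduce $\cls{C}'=\cls{R}$, hence $O'=O\subseteq X$. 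Consequently $\theta$ factors through $X$ and $\lim\gamma=x$ lies in the closed orbit, as desired.

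The main obstacle is the Luna-Vust assertion just cited, and I would establish it by way of the auxiliary morphism $\psi_{\gamma}:\spec L\to G/H$ from \S\ref{amoeba}, where $L=k(G)((t))$. Applying the valuative criterion once more, this time to $X'_{k(G)}\to\spec k(G)$ and the discrete valuation ring $\tilde{R}=k(G)[[t]]\subset L$, produces an extension $\tilde{\theta}:\spec\tilde{R}\to X'_{k(G)}$. By the formula $v_{\gamma}(f)=\nu(\psi_{\gamma}^{\ast}f)$ for $f\in k(G/H)^{\times}$, the valuation obtained by pulling back along $\tilde{\theta}$ restricts to $v_{\gamma}$ on $k(G/H)$. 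Since $X'$ is toroidal, a $B$-semi-invariant rational function $f$ of weight $\chi$ has order $\inn{v_D,\chi}$ along each $G$-stable prime divisor $D\subseteq X'$, and the divisors containing $O'$ yield generators $v_D$ of $\cls{C}'$; tracking these orders identifies $v_{\gamma}$ as a positive combination of the $v_D$'s lying in $(\cls{C}')^{\circ}$. Carrying out this comparison of valuations on $B$-semi-invariants is the technical heart of the argument, and everything else is bookkeeping around fans and the valuative criterion.
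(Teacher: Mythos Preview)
Your global outline coincides with the paper's: pass to a toroidal completion $X'$, apply the valuative criterion to obtain $\theta:\spec R\to X'$ with $x=\theta(\text{closed pt})$, and then argue that the $G$-orbit $O'$ of $x$ has cone $\cls{R}$. Where the paper establishes this last point by a bare-hands contradiction in three cases (depending on whether $x\in G/H$, $x$ lies in a different boundary divisor, or $x\in\overline{O}\smallsetminus O$), you instead try to show directly that $v_\gamma\in(\cls{C}')^\circ$ via the auxiliary arc $\psi_\gamma$ over $k(G)$.

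The gap is in your last paragraph: you produce $\tilde\theta:\spec k(G)[[t]]\to X'_{k(G)}$ by the valuative criterion, but you never identify where its closed point lands. Without this, ``tracking these orders'' computes $v_\gamma$ as a positive combination of the $v_D$ for $D$ containing the orbit of $\lim\psi_\gamma$, not the orbit $O'$ of $\lim\gamma$; a priori these could differ, and then the fan axiom gives you nothing about $\cls{C}'$. The fix is to observe that there is another extension of $\psi_\gamma$ to $\spec k(G)[[t]]$, namely the one factoring through $\spec(k(G)\otimes_k R)\xrightarrow{\eta_G\times\theta}G\times X'\xrightarrow{\mu}X'$; by uniqueness in the valuative criterion this coincides with $\tilde\theta$, and it visibly sends the closed point to $\mu(\eta_G,x)=\eta_{O'}$. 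Once you know the center of $\tilde\theta$ is $\eta_{O'}$, you still need the toroidal local structure (no color contains $O'$, so the divisor of a $B$-semi-invariant $f$ near $\eta_{O'}$ is supported on the $G$-stable divisors $D\supseteq O'$) to write $f=u\prod z_D^{\langle v_D,\chi_f\rangle}$ locally and conclude $v_\gamma=\sum n_D v_D$ with each $n_D=\nu(\tilde\theta^\ast z_D)>0$. With these two points supplied your route works and is more conceptual than the paper's case analysis; but as written the crucial link between $x$ and the center of $\tilde\theta$ is missing.
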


\begin{proof}
The ray $\cls{R}$ is generated by some $G$-invariant discrete valuation $v_D$, associated to a $G$-stable prime divisor $D\subset X_R$ containing $O$. Since $X_R$ is toroidal and $\dim \cls{C}=1$, $O$ is of codimension $1$, hence $D=O$. 
Write $v_{\gamma}=cv_D$ for some $c\in \qq_{> 0}$.

Let $X_R\emb X$ be an equivariant compactification in a toroidal spherical variety. Due to properness of $X'$, there is a (unique) morphism $\theta$ such that \eqref{werfqer} commutes.
%$$\begindc{\commdiag}[40]\obj(0,8)[lar]{$\spec K$} \obj(0,0)[dvr]{$\spec R$} \obj(16,8)[xp]{$X'$}
%\mor{lar}{dvr}{} \mor{lar}{xp}{$\gamma_{X'}$} \mor{dvr}{xp}{$\theta$}[-1,1]\enddc$$
Write $\tilde{x}$ and $\tilde{\xi}$ for the closed and the generic point of $\spec R$, respectively, and let $x=\theta(\tilde{x})$ and $\xi=\theta(\tilde{\xi})$ be their images in $X$.
Consider the induced map on stalks:
$$
\theta^{\ast}_x:\cla{O}_{X,x}\rightarrow \cla{O}_{\spec R,\tilde{x}}.
$$
We have $\gamma^{\ast}(f)=\theta_x^{\ast}(f)$ for any $f\in \cla{O}_{X,x}$.
Since $\theta^{\ast}_x$ is a local homomorphism,
%Let $U=\spec A$ be an affine open set in $X'$ that contains $x$. Let $\cl{p}\subset A$ be the prime ideal associated to $x$. We view $\theta_x^{\ast}$ as a map $A_\cl{p}\rightarrow R$ via the natural identifications. It is a local homomorphism: if $f\in A_{\cl{p}}$, then $\theta^{\ast}_x(f)$ is a unit, i.e. a series in $R=k[[t]]$ with a non-zero constant term, precisely when $f$ is a unit, hence 
$\nu(\gamma^{\ast}(f))=0$ if $f$ does not vanish at $x$, and $\nu(\gamma^{\ast}(f))>0$ otherwise.
We claim that $x\in O$.
Assume the opposite is true. Write $\bar O$ for the closure of $O$ in $X$. 
We consider three  cases: (i) $x\in G/H$, (ii) $x\not\in G/H$ and $x\not \in \bar O$, 
and (iii) $x\in \bar O- O$.
Let $O_x\subseteq X$ be the $G$-orbit containing~$x$.

(i) Pick an affine open set $U=\spec A\subset X_R$ that contains $x$ and intersects~$\bar O$.
It  exists because $X_R$ is quasi-projective. 
Let $\cl{p}\subset A$ be the prime ideal of  $x\in U$ and write $\bar O\cap U=V(\cl{a})$ for some ideal $\cl{a}\subset A$.
Choose $f\in\cl{a}\setminus\cl{p}$.
Then $f$ is a unit in~$A_{\cl{p}}$, hence $\nu(\gamma^{\ast}(f))=0$, which implies $v_{\gamma}(f)\leq 0$. On the other hand,  since $f\in\cl{a}$, $f$ vanishes on $\bar O\cap U$ and so $v_D(f)>0$, which is a contradiction.

(ii) 
Let $D_1,\ldots,D_r$
be $G$-stable prime divisors containing $O_x$.
Since $x\not\in \bar O$, neither of these divisors is equal to $D$.
It follows that the open ray $\cls{R}^{\circ}$ is disjoint from the cone in $\cls{V}$ spanned by $v_{D_1},\ldots,v_{D_r}$,
and therefore there is a function $f\in k(G/H)^{(B)}$ such that $v_{D_i}(f)>0$ for $i=1,\ldots,r$ but
$v_{\gamma}(f)< 0$.
Since $X$ is normal,  $f\in\cla{O}_{X,x'}$ and vanishes at $x'$ for any $x'\in O_x$.
Thus the same is true for $gf$ for any $g\in G$.
It follows that $v_{\gamma}(f)>0$, which is a contradiction.

%Pick an affine open set $U=\spec A$ containing $x$ and let $\cl{p}$ be the prime ideal of~$x\in U$. 
%Write $D'\cap U=V(\cl{a})$ for some ideal $\cl{a}\subset A$. 
%Since $x\in D'$, $\cl{p}\supseteq \cl{a}$. 
%Let $f\in A$ be such that $v_{D'}(f)>0$, 
%then $f$ vanishes on $D'\cap U$, i.e. $f\in \cl{a}$, and hence $f\in \cl{p}$. 
%Then $f$ is not a unit in $A_{\cl{p}}$, so that $\nu(\gamma^{\ast}(f))>0$. 
%For any $g$ from an open set of $G$, $gf$ is a rational function on $X$ vanishing at $x$. 
%Indeed, if $O_x\subseteq D'$ is the $G$-orbit containing~$x$, then $O_x\cap U\subset O_x$ is a non-empty open subset
%, since it contains $x$, 
%and $f$ vanishes on it. Then, for $g$ from an open set of  $G$, the intersection of the domain of $gf$ 
%with $O_x$ is an open set containing $x$, and $gf$ vanishes at it. 
%Thus $gf$ is not a unit in $\cla{O}_{X,x}$, and as before $\nu(\gamma^{\ast}(gf))>0$. It follows that $v_{\gamma}(f)>0$. Since this holds for any regular function around $x$, \red{there is no rational function} $f$ on $X$ such that $v_{D'}(f)>0$ but $v_{\gamma}(f)\leq 0$.
%On the other hand, $v_{D'}$ is 
%not a positive multiple of  $v_{D}=cv_{\gamma}$.
%Thus $v_{D'}(f)>0$ but $v_{\gamma}(f)\le 0$
%for some function $f\in k(G/H)^{\times}$.

(iii) 
%If $x\in O'-O$, then in particular $O'-O$ is non-empty and $O'$ strictly contains $O$. Also, $O$ is open dense in $O'$, hence $O'-O$ is a closed set in $X'$, which is $G$-stable. Let $U=\spec A$ be an affine open set containing $x$, and so intersecting $O'$ and $O$, and let $\cl{p}\subset A$ be the prime ideal associated to $x$. Write $O'\cap U=V(\cl{a})$ and $(O'-O)\cap U=V(\cl{b})$ (with $\cl{a},\cl{b}$ radical). Since $O'-O\subset O'$, $\cl{a}\subset \cl{b}$ (strict inclusion). Also, since $x\in O'-O$, $\cl{p}\supseteq \cl{b}$.
Let $f\in \cla{O}_{X,x}$ be such that $f$ vanishes along $O_x$ near $x$
bit not along $\bar O$, hence $v_D(f)=0$. On the other hand, $gf$ vanishes at $x$
for  $g$ from an open set of $G$, so $v_{\gamma}(f)>0$, which is a contradiction.
%\smallskip
%Since $x\in O$, in particular $x\in X$ and hence $\im \theta\subseteq X$. Thus $\theta$ factors through $X$; abusing notation, write $\theta:\spec R\rightarrow X$. We have a commutative diagram:
%$$
%\begindc{\commdiag}[6]
%\obj(0,8)[lar]{$\spec K$} \obj(0,0)[dvr]{$\spec R$} \obj(16,8)[xp]{$X$}
%\mor{lar}{dvr}{} \mor{lar}{xp}{$\gamma_{X}$} \mor{dvr}{xp}{$\theta$}[-1,0]
%\enddc
%$$
%It follows that $\lim \gamma$ exists in $X$ and is equal to $x$, which is in $O$.
\end{proof}

\begin{lem} \label{ptinboundarycurve}
Let $\cls{C}\subseteq \cls{V}$ be a %(non-trivial) 
cone and $X$ the associated toroidal simple spherical variety with a closed $G$-orbit $O$.
Let $Y\subseteq G/H$ be a closed subvariety, $\overline{Y}\subseteq X$ its closure, and $x\in O\cap \overline{Y}$.
Then there is $\gamma\in Y(K)$ such that $\lim\gamma=x$ and $v_{\gamma}\in\cls{C}^{\circ}$.
\end{lem}

\begin{proof}
By cutting $Y\subset X$ with general hypersurfaces passing through $x$, we can assume without loss of generality that $Y$
is $1$-dimensional.
Let $B\subset k(Y)$ be a discrete valuation ring that dominates $\cla{O}_{\overline{Y},x}$.
%Note that the first statement is a special case of the second for $Y=G/H$, so we only need to prove the second one. 
By the Cohen's Structure Theorem \cite[Prop. 10.16]{E}), 
the completion $\widehat{B}$ of $B$ is isomorphic to $R=k[[t]]$, and we identify it with this ring.
The inclusions
\begin{equation*} \label{gammamaps}
\cla{O}_{\overline{Y},x} \emb B\emb \widehat{B} \emb K
\end{equation*}
give rise to a morphism %$\phi:\spec K\rightarrow  \overline{Y}$ and 
$\theta:\spec\widehat{B}\rightarrow \overline{Y}$ of schemes over $k$
that sends the generic point of $\spec\widehat{B}$ to the generic point of $Y$ and the closed point to $x$.

The restriction of $\theta$ to $\spec K$ is a $K$-point $\gamma$ of $Y$ such that
%The composition of $\phi$ with $\overline{Y}\emb X$ is the same as $\gamma_{X}$. Let $\theta$ be the composition of $\psi$ with $\overline{Y}\emb X$. We have a commutative diagram:
%$$
%\begindc{\commdiag}[40]
%\obj(0,8)[lar]{$\spec K$} \obj(0,0)[dvr]{$\spec \widehat{B}$} \obj(16,8)[xp]{$X$}
%\mor{lar}{dvr}{} \mor{lar}{xp}{$\gamma_{X}$} \mor{dvr}{xp}{$\theta$}[-1,0]
%\enddc
%$$
%It follows that 
$\lim \gamma$ exists and is equal to $x$.
Now we show that $v_{\gamma}\in \cls{C}^{\circ}$. Let $\cls{C}_0$ be the ray generated by $v_{\gamma}$ in $\cls{V}$.
If $\cls{C}_0$ is in $\cls{C}^{\circ}$ then we are done. Assume not. We consider two cases, (i) $\cls{C}_0$ is not contained in $\cls{C}$, and (ii) $\cls{C}_0$ is in $\cls{C}-\cls{C}^{\circ}$.
Let $X_0$ be the associated toroidal simple spherical variety of $\cls{C}_0$
with closed $G$-orbit~$O_0$. Write $Y_0\subseteq X_0$ for the closure of $Y$.

(i) Let $\cla{F}$ be a fan (without colors) with cones $\cls{C}_0$ and $\cls{C}$, and let $X'$ be the associated toroidal spherical variety. There are open $G$-embeddings $X_0\emb X'$ and $X\emb X'$. % that fix $G/H$. %We treat $X_0$ and $X$ as $G$-stable open subsets of $X'$. 
Since the cones $\cls{C}_0$ and $\cls{C}$ don't intersect (except at the origin), the orbits $O_0$ and $O$ are disjoint in $X'$. 
Since $v_{\gamma}\in \cls{C}_0^{\circ}$, $\lim \gamma$ exists in $X_0$ and is in $O_0$ by Lemma \ref{limpt}.
% we know that $\lim \gamma$ exists in $X_0$ and is in $O_0$. 
%Then clearly $\lim \gamma$ exists in $X'$ and is the same point in $O_0$. 
But this can't be true because $\lim \gamma=x\in O$ as shown above.

(ii) There is a birational $G$-morphism $X_0\rightarrow X$. % that fixes $G/H$. 
Since $\cls{C}_0$ is not contained in~$\cls{C}^{\circ}$, the closed orbit $O_0$ of $X_0$ does not map to the closed orbit $O$ of $X$.
%, but to an orbit of smaller codimension. 
From Lemma~\ref{limpt}, $\lim \gamma$ exists in $X_0$ and is in $O_0$. 
The image of $\lim \gamma$ under $X_0\rightarrow X$, which is the limit point of $\gamma$ in $X$,
lies in the orbit $O$, which is a contradiction.
% is a point in the boundary of $X$ that is not in $O$. But by construction, the limit point of $\gamma$ in $X$ lies in the orbit $O$.
\end{proof}

The next proposition is an extension of \cite[Lemma 2.2]{Te} %(\emph{Tevelev's Lemma}) 
to spherical varieties. 
%To prove this, we first need to show that if $v\in \trop Y$, then the whole ray of $v$ is in $\trop Y$.

\begin{prop} \label{tevelevlem}
Let $X$ be a simple toroidal spherical variety with a closed $G$-orbit~$O$, and let $\cls{C}$ be the associated cone in $\mathrm{Q}$. Then $\trop Y$ intersects the relative interior of $\cls{C}$ if and only if the closure $\overline{Y}\subseteq X$ intersects the closed orbit $O$.
\end{prop}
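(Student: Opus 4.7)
My plan is to treat the two implications separately. The backward direction is essentially immediate from Lemma \ref{ptinboundarycurve}: given $x \in \overline{Y} \cap O$, that lemma directly produces $\gamma \in Y(K)$ with $\lim \gamma = x$ and $v_\gamma \in \cls{C}^\circ$, so $v_\gamma$ certifies that $\trop Y$ meets $\cls{C}^\circ$.

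For the forward direction, I would begin with $v \in \trop Y \cap \cls{C}^\circ$ and invoke Remark \ref{kptsameray} to replace $v$ by some $v_\gamma$ with $\gamma \in Y(K)$ lying on the same ray through the origin as $v$; since $\cls{C}^\circ$ is preserved under multiplication by positive rationals, $v_\gamma$ is again in $\cls{C}^\circ$. The problem then reduces to showing that $\lim \gamma$ exists in $X$ and belongs to $O$, for then the factorization of $\gamma$ through $Y$ forces $\lim \gamma \in \overline{Y} \cap O$.

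To produce the limit, I would pass to a toroidal equivariant compactification $X \emb X'$ whose fan $\cla{F}'$ is then complete and contains $\cls{C}$ as a cone. Let $\cls{R}$ denote the ray through $v_\gamma$ in $\cls{V}$. If $\cls{R}$ is not already a ray of $\cla{F}'$, refine $\cla{F}'$ to a toroidal fan $\cla{F}''$ in which $\cls{R}$ appears as a one-dimensional cone, obtaining a proper birational $G$-morphism $\pi \colon X'' \to X'$ that fixes $G/H$. Applying Lemma \ref{limpt} to $\cls{R}$ and the simple toroidal open $G$-subvariety $X_{\cls{R}} \subseteq X''$ associated to $\cls{R}$ shows that $\lim \gamma$ exists in $X_{\cls{R}}$, hence in $X''$, and lies in the closed orbit $O_{\cls{R}}$ of $X_{\cls{R}}$.

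The main obstacle, as I see it, is identifying $\pi(O_{\cls{R}})$ inside $X'$. What I need is the Luna-Vust fact, analogous to the toric picture, that under a toroidal refinement morphism $X'' \to X'$ the $G$-orbit attached to a cone $\sigma \in \cla{F}''$ maps into the $G$-orbit attached to the unique cone of $\cla{F}'$ whose relative interior contains $\sigma^\circ$. Applied to $\sigma = \cls{R}$, the inclusion $v_\gamma \in \cls{R}^\circ \cap \cls{C}^\circ$ together with the disjointness of relative interiors of distinct cones of $\cla{F}'$ pins that unique cone down to $\cls{C}$, so $\pi(O_{\cls{R}}) \subseteq O$. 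Consequently $\lim \gamma$, viewed inside $X'$, lands in $O \subseteq X$, and combined with $\gamma$ factoring through $Y$ this delivers a point of $\overline{Y} \cap O$.
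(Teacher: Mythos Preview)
Your proof is correct and follows essentially the same strategy as the paper: both directions hinge on Lemma~\ref{ptinboundarycurve} (for $\Leftarrow$) and on Lemma~\ref{limpt} combined with the Luna--Vust orbit correspondence (for $\Rightarrow$). The only difference is that in the forward direction the paper works more economically: rather than compactifying to $X'$ and refining to $X''$, it directly takes the simple toroidal spherical variety $X_0$ attached to the ray $\cls{C}_0$ through $v_\gamma$, and uses the $G$-morphism $X_0 \to X$ (which exists because $\cls{C}_0^\circ \subseteq \cls{C}^\circ$) to carry $\lim\gamma \in O_0$ to a point of $O$; your $X_{\cls{R}}$ is this same $X_0$, just realized inside an auxiliary $X''$ that is not needed.
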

\begin{proof}
First assume that $\trop Y\cap \cls{C}^{\circ}\neq \varnothing$, and let $v\in \trop Y\cap \cls{C}^{\circ}$. By Lemma~\ref{kptsameray}, 
we may assume that $v=v_{\gamma}$ with $\gamma\in Y(K)$. Let $X_0$ be the toroidal simple spherical variety associated to $\cls{C}_0$, $O_0$ the closed $G$-orbit of $X_0$, and $Y_0\subseteq X_0$ the closure of~$Y$. 
By~Lemma \ref{limpt}, the limit point of $\gamma$ in $X_0$ is in $O_0$,
which shows that $Y_0\cap O_0$ is non-empty. 
Since $\cls{C}_0$ is in $\cls{C}^{\circ}$, there is a birational $G$-morphism $f:X_0\rightarrow X$ %that fixes $G/H$ and 
that sends $O_0$ to~$O$ and maps $Y_0$ to $\overline{Y}$. 
Therefore, $\overline{Y}\cap O\neq \varnothing$.
%Therefore, if $x$ is a point in $Y_0\cap O_0$, then $f(x)\in \overline{Y}\cap O$, hence 
%By~Lemma \ref{limpt}, the limit point of $\gamma$ is in $O_0$:
%$$
%\begindc{\commdiag}[40]
%\obj(0,8)[lar]{$\spec K$} \obj(0,0)[dvr]{$\spec R$} \obj(16,8)[xp]{$X_0$}
%\mor{lar}{dvr}{} \mor{lar}{xp}{$\gamma_{X_0}$} \mor{dvr}{xp}{$\theta$}[-1,0]
%\enddc
%$$
%Since $\gamma_{X_0}$ factors through $Y$, the image of the generic point of $\spec R$ under $\theta$, say $\xi$, is in $Y$. As $\lim \gamma$ is in the closure of $\xi$, it is also in the closure $Y_0$. Thus $\lim \gamma\in Y_0\cap O_0$ and we are through.

Now assume that $\overline{Y}\cap O\neq \varnothing$. %Pick $x\in \overline{Y}\cap O$. 
By Lemma \ref{ptinboundarycurve}, 
there is a $K$-point $\gamma\in Y(K)$ such that $v_{\gamma}\in \cls{C}^{\circ}$, so that
%. Clearly $v_{\gamma}\in \trop Y$, and so 
$\trop Y\cap \cls{C}^{\circ}\neq \varnothing$. This completes the proof.
\end{proof}

The following propositions and their proofs generalize \cite[Prop. 2.3 and 2.5]{Te}.
%are the same with the ones of Propositions 2.3 and 2.5 in \cite{Te} for the toric case, but they are short and we include them for the sake of completeness, with the appropriate modifications.

\begin{prop} \label{propertrop}
Let $X$ be a toroidal spherical variety associated with a fan $\cla{F}$.
Then $\overline{Y}$ is complete if and only if $\trop Y\subseteq \supp \cla{F}$.
\end{prop}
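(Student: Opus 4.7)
The plan is to reduce the proposition to Proposition~\ref{tevelevlem}, applied orbit by orbit, by embedding $X$ in a complete toroidal spherical variety. First, I would choose a toroidal fan $\cla{F}'$ in $\cls{V}$ containing $\cla{F}$ as a subfan and with $\supp \cla{F}'=\cls{V}$; such a completion exists by the standard Luna--Vust arguments already recorded in \S\ref{spherreview}. The associated toroidal spherical variety $X'$ is then complete and contains $X$ as a $G$-stable open subvariety. Let $\overline{Y}'\subseteq X'$ denote the closure of $Y$; it is complete because $X'$ is.

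The key reduction I would set up is the chain of equivalences
$$
\overline{Y}\text{ is complete}\ \eqvs\ \overline{Y}'\subseteq X\ \eqvs\ \overline{Y}'\cap O_{\cls{C}}=\varnothing\text{ for every }\cls{C}\in \cla{F}'\setminus \cla{F},
$$
where $O_{\cls{C}}$ denotes the $G$-orbit of $X'$ associated to $\cls{C}$. The first equivalence uses that $X'$ is separated: if $\overline{Y}$ is complete, it is closed in $X'$ and therefore equals $\overline{Y}'$; conversely, if $\overline{Y}'\subseteq X$ then $\overline{Y}=\overline{Y}'$ inherits completeness from $\overline{Y}'$. The second equivalence is immediate from the Luna--Vust orbit correspondence, since the orbits of $X'$ that are not contained in $X$ are precisely the $O_{\cls{C}}$ with $\cls{C}\in \cla{F}'\setminus \cla{F}$.

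For each $\cls{C}\in \cla{F}'\setminus \cla{F}$, I would apply Proposition~\ref{tevelevlem} to the simple open toroidal subvariety $X'_{\cls{C}}\subseteq X'$, inside which the closure of $Y$ is $\overline{Y}'\cap X'_{\cls{C}}$ and the closed orbit is $O_{\cls{C}}$; this gives $\overline{Y}'\cap O_{\cls{C}}\neq \varnothing$ if and only if $\trop Y \cap \cls{C}^{\circ}\neq \varnothing$. Since the relative interiors of the cones of $\cla{F}'$ partition $\cls{V}=\supp \cla{F}'$, the disjoint union of the $\cls{C}^{\circ}$ for $\cls{C}\in \cla{F}'\setminus \cla{F}$ is exactly $\cls{V}\setminus \supp \cla{F}$, and $\trop Y\subseteq \cls{V}$, so avoiding all of these relative interiors is precisely the condition $\trop Y\subseteq \supp \cla{F}$. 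The geometric content is already packaged inside Proposition~\ref{tevelevlem}, so the rest is bookkeeping; the only nontrivial input is the existence of a toroidal completion of $\cla{F}$, which is standard.
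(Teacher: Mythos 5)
Your argument is correct and is essentially the paper's proof: both pass to a toroidal equivariant completion $X'$ of $X$ with fan $\cla{F}'$ containing $\cla{F}$ and supported on all of $\cls{V}$, and then apply Proposition~\ref{tevelevlem} to the cones of $\cla{F}'$ not in $\cla{F}$, whose associated orbits constitute the boundary $X'-X$. The only difference is presentational: you organize the two directions as a single chain of equivalences (using that the relative interiors of the cones of $\cla{F}'$ cover $\cls{V}$ without overlap), while the paper runs each direction as a separate argument by contradiction.
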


\begin{proof}
Suppose that $\overline{Y}$ is complete but $\trop Y$ is not contained in $\supp\cla{F}$. 
Let $X\emb X'$ be an  equivariant compactification of $X$ in some toroidal spherical variety~$X'$, associated to a fan $\cla{F}'$ containing $\cla{F}$. Since $X'$ is complete, $\supp \cla{F}'=\cls{V}$, hence there is a cone $\cls{C}$ of $\cla{F}'$ whose interior does not intersect $\cla{F}$ and contains a point of $\trop Y$. Let $Y'$ be the closure of $Y$ in $X'$. 
Since $\overline{Y}$ is complete, $Y'=\overline{Y}$. Thus $Y'$ does not intersect the boundary $X'-X$. This boundary contains the closed $G$-orbit corresponding to $\cls{C}$, and this contradicts Proposition \ref{tevelevlem}.

Now assume that $\trop Y\subseteq \supp\cla{F}$ but $\overline{Y}$ is not complete. Let $X\emb X'$, $\cla{F}'$, and $Y'$ be as above. Since $\overline{Y}$ is not complete but $Y'$ is, as a closed subvariety of a complete variety, the inclusion $\overline{Y}\subset Y'$ is strict. In particular, $Y'$ intersects some $G$-orbit in $X'-X$, which corresponds to a cone $\cls{C}$ of $\cla{F}'$ whose interior does not intersect $\cla{F}$. By Proposition \ref{tevelevlem}, $\cls{C}^{\circ}$ intersects $\trop Y$, but this is not the case as the latter is contained in $\supp \cla{F}$.
\end{proof}

\begin{prop} \label{tropsupport}
If $\overline{Y}$ is a tropical compactification of $Y$ in a toroidal spherical variety $X$ associated to a fan $\cla{F}$
then $\supp \cla{F}=\trop Y$.
\end{prop}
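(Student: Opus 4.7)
The inclusion $\trop Y \subseteq \supp \cla{F}$ is immediate from Proposition \ref{propertrop}, since a tropical compactification is complete by definition. The substance of the proposition lies in the reverse inclusion $\supp \cla{F} \subseteq \trop Y$, which I would prove by exploiting the surjectivity of $\mu_{\overline{Y}}$ together with Proposition \ref{tevelevlem} after an appropriate refinement of the fan.

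Fix $v \in \supp \cla{F}$. The case $v = 0$ is handled by noting that any $k$-point of $Y$ gives rise to a $K$-point of $Y$ whose induced valuation is trivial, so $0 \in \trop Y$. For $v \neq 0$, let $\cls{C}_0 = \qq_{\geq 0} v \subseteq \cls{V}$ be the ray through $v$; it lies in some cone of $\cla{F}$. The plan is to subdivide every cone of $\cla{F}$ that contains $v$ by joining with the ray $\cls{C}_0$. Since $\cla{F}$ is colorless and $\cls{C}_0 \subseteq \cls{V}$, this produces a colorless fan $\cla{F}'$ refining $\cla{F}$, corresponding to a toroidal spherical variety $X'$ together with a proper birational $G$-morphism $f : X' \to X$ that restricts to the identity on $G/H$.

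By Proposition \ref{tropcompbir} the pure transform $\overline{Y}' = f^{-1}(\overline{Y})$ is a tropical compactification of $Y$ in $X'$; in particular $\mu_{\overline{Y}'}$ is surjective. The ray $\cls{C}_0$ is now a cone of $\cla{F}'$, giving rise to a $G$-orbit $O_{\cls{C}_0}$ in $X'$, which appears as the unique closed orbit of the simple toroidal open subvariety $X'_{\cls{C}_0} \subseteq X'$. By $G$-stability of $G$-orbits, surjectivity of $\mu_{\overline{Y}'}$ forces $\overline{Y}'$ to meet $O_{\cls{C}_0}$; hence the closure of $Y$ in $X'_{\cls{C}_0}$ intersects its closed orbit. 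Proposition \ref{tevelevlem} applied to $X'_{\cls{C}_0}$ then yields $v' \in \trop Y \cap \cls{C}_0^{\circ}$, so $v' = c v$ for some $c \in \qq_{>0}$. Finally, Lemma \ref{raytrop} gives $v = c^{-1} v' \in \trop Y$, as desired.

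The main obstacle is the refinement step: one must verify that subdividing $\cla{F}$ along the ray $\cls{C}_0$ produces a bona fide colorless fan in $\cls{V}$ whose associated variety $X'$ admits a proper birational $G$-morphism to $X$ restricting to the identity on $G/H$, so that Proposition \ref{tropcompbir} applies and transports the tropical compactification property from $\overline{Y}$ to $\overline{Y}'$. Once this combinatorial refinement is carried out, the remainder is a direct combination of the spherical Tevelev lemma (Prop. \ref{tevelevlem}) with the ray property (Lem. \ref{raytrop}).
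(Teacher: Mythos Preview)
Your argument is correct and is essentially the paper's own proof, using the same refinement along a ray, Proposition~\ref{tropcompbir}, surjectivity of the multiplication map, and Proposition~\ref{tevelevlem}; the only difference is that the paper phrases the reverse inclusion by contradiction (assuming $v\notin\trop Y$ and deriving that the refined orbit is missed by $\overline{Y}'$), whereas you argue directly and invoke Lemma~\ref{raytrop} at the end. The paper likewise takes the refinement step for granted, so your acknowledged ``obstacle'' is no more of a gap than in the original.
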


\begin{proof}
Suppose $\supp \cla{F}\ne\trop Y$.
%Assume that the support of $\cla{F}$ is not $\trop Y$. 
By Proposition \ref{propertrop}, $\supp \cla{F}$ contains $\trop Y$. Let $v\in\supp\cla{F}$ be an element not in $\trop Y$. Then the entire ray $R$ generated by~$v$ is not in $\trop Y$ (Lemma \ref{raytrop}). 
Let $\cla{F}'$ be a refinement of $\cla{F}$ that contains the cone~$R$ %a cone $\cls{C}$ that does not intersect $\trop Y$ (for instance, the ray of $v$), 
and let $X'$ be the toroidal spherical variety defined by it.
There is a proper birational $G$-morphism $f:X'\rightarrow X$. % that fixes $G/H$. 
The closure $Y'\subseteq X'$ of $Y$, which is the pure transform of $\overline{Y}$ with respect to $f$, is a tropical compactification of $Y$ (Prop. \ref{tropcompbir}). The multiplication morphism $G\times Y'\rightarrow X'$ is faithfully flat, hence surjective, and so $Y'$ intersects every $G$-orbit of $X'$. But by Proposition \ref{tevelevlem} this is not the case for the closed $G$-orbit associated to the cone $R$.
\end{proof}

\begin{proof}[Proof of Theorem \ref{mainthmspherical}]
By Theorem \ref{tropcompexist}, tropical compactifications of $Y$  exist. Let $\overline{Y}\subseteq X$ be one of them
 and let $\cla{F}$ be the fan associated to $X$. Let $\cla{F}'$ be the fan obtained by removing all colors from $\cla{F}$, i.e. $\cla{F}'$ consists of all cones $\cls{C}\cap \cls{V}$ for $(\cls{C},\cls{F})\in\cla{F}$, and let $X'$ be the associated toroidal spherical variety. In particular, $\supp\cla{F}'=\supp\cla{F}$. There is a proper birational $G$-morphism $f:X'\rightarrow X$ restricting to the identity on $G/H$. 
By Prop. \ref{tropcompbir}, the closure $Y'\subseteq X'$ of $Y$ is a tropical compactification in a toroidal spherical variety.
Finally, 
$$
\supp \cla{F}=\supp\cla{F}'=\trop Y
$$
by Prop.~\ref{tropsupport}, which completes the proof.
\end{proof}

\begin{exam}\rm
In the notation of Example~\ref{puncturedplanetrop},
let $\overline{C}\subseteq \mathbb{X}$ be a tropical compactification with $\mathbb{X}$ toroidal. 
Then $\mathbb{X}$ is $\pp^2-\crb{0}$ if $C$ does not pass through the origin $O\in\aa^2$
or $\blow_0\pp^2$ if it does. In particular, $\mathbb{X}$ contains the boundary $W$ and the intersection number $\overline{C}\cdot W$ is the degree of the curve $C$. On the other hand, if $\mathbb{X}=\blow_0\pp^2$, the intersection number $\overline{C}\cdot E$
is the multiplicity of $C$ at the origin, which is typically smaller than $\deg C$. 
It follows that even though we can define multiplicities of cones of $\trop X$  as in the toric case \cite{ST}, 
and they encode important geometric information, the balancing condition does not hold.
\end{exam}

%----------------------------------------------------------------------------------------------------------------------------------------------
\section{Further Examples of Spherical Tropicalization} \label{allexamples}

%In this section we give further examples of tropicalization and tropical compactifications of subvarieties of 
%spherical homogeneous spaces. 
We use notation of \S\ref{sphericaltrop}.
Lemma~\ref{kptsameray} allows us to use $K$-points instead of $\overline{K}$-points. 

\subsection{Subvarieties of $\gln$} \label{lingps}
We will prove Theorem \ref{tropgln} and  its analog for $\sln$:

\begin{thm} \label{tropsln}
Let $Y=(f_1=\ldots=f_s=0)\subset \sln$ be a closed subvariety.
Then $\trop Y$ consists of $(n-1)$-tuples $(\alpha_1,\ldots,\alpha_{n-1})$ of  %greatest 
invariant factors (in~decreasing order) of matrices $M\in\sln(\overline{K})$
such that $f_1(M)=\ldots =f_r(M)=0$.
\end{thm}

We consider % the cases of 
$G=\gln$ or $\sln$ simultaneously. 
Consider the group $G\times G$ with Borel subgroup $B$ consisting of pairs of an upper and a lower triangular matrices. 
Let $H=\{(g,g)\in G\times G\}$ be the diagonal. 
Then $(G\times G)/H\simeq G$ %is a homogeneous space, where 
with the action given by left and right multiplication
$
(g,h)\cdot x=gxh^{-1}$ for $(g,h)\in G\times G$, $x\in G$.
This is a spherical homogeneous space by the Bruhat decomposition.
%with an open $B$-orbit
%$$
%O=\crb{x \in G\,|\, x_{nn}\neq 0}.
%$$
Let  %$g_{ij},h_{ij}$ be coordinates for the group $G\times G$, and 
$x_{ij}$ be matrix coordinates for $G$.
For $G=\gln$, the group of characters is $\cla{X}\simeq \zz^{2n}$, 
where $(\mathbf{l},\mathbf{m})=(l_1,\ldots,l_n,m_1,\ldots,m_n)\in \zz^{2n}$ corresponds to
\begin{equation}\label{weevwev}
\chi_{(\mathbf{l},\mathbf{m})}: B\rightarrow k^{\times},\quad \brk{(a_{ij}),(b_{ij})}\mapsto \prod_{i=1}^n a_{ii}^{-l_i}b_{ii}^{m_i}.
\end{equation}
For $G=\sln$,  $\cla{X}\cong \zz^{2(n-1)}$, where $(\mathbf{l},\mathbf{m})=(l_1,\ldots,l_{n-1},m_1,\ldots,m_{n-1})\in \zz^{2(n-1)}$ 
corresponds to the character \eqref{weevwev} with $l_n=m_n=0$.

For $G=\gln$, the lattice $\Lambda$ is generated by  $B$-semi-invariant functions
$$
f'_i=\det \brk{
\begin{array}{cccc}
x_{i,i} & x_{i,i+1} & \ldots & x_{i,n} \\
x_{i+1,i} & x_{i+1,i+1} & \ldots & x_{i+1,n} \\
\vdots & \vdots & \ddots & \vdots \\
x_{n,i} & x_{n,i+1} & \ldots & x_{n,n} \\
\end{array}}$$
for $i=1,\ldots,n$.
For example, $f'_1=\det x$ and $f'_n=x_{nn}$. The character of $f'_i$ is $\chi'_i=\chi_{(\mathbf{m}_i,\mathbf{m}_i)}$, where $\mathbf{m}_i=(0,\ldots,0,1,\ldots,1)$ (the first entry~$1$ is the $i$-th~one). 
The colors are the $B$-stable prime divisors $D_2,\ldots, D_n$ given by the functions $f'_2,\ldots,f'_{n}$, and $\varrho(D_i)=(\chi'_i)^{\ast}$ in~$\mathrm{Q}$ (the dual basis to $\chi'_i$). 
A better set of generators is $f_1,\ldots,f_n$, where $f_i=f'_i/f'_{i+1}$ for $i<n$, and $f_n=f'_n$. The character associated to $f_i$ is $\chi_i=\chi_{(\mathbf{e}_i,\mathbf{e}_i)}$, 
where $\mathbf{e}_i=(0,\ldots,0,1,0,\ldots,0)$ (the $1$ in the $i$-th entry). The vector space $\mathrm{Q}$ is $n$-dimensional, spanned by the dual basis $\chi_1^{\ast},\ldots,\chi_n^{\ast}$ to $f_1,\ldots,f_n$.

For $G=\sln$, we define $f'_2,\ldots, f'_n$ as for $G=\gln$ %, i.e. ignore the determinant function, 
and then let $f_1=1/f'_{2}$, and $f_i=f'_i/f'_{i+1}$ for $i=2,\ldots, n-1$. The character $\chi'_i$ associated to $f'_i$ is $\chi_{(\mathbf{l},\mathbf{l})}$, where $\mathbf{l}=(-1,\ldots,-1,0,\ldots,0)$ (the first zero in the $i$-th position). The character $\chi_i$ associated to $f_i$ is as in the case of $G=\gln$. The vector space $\mathrm{Q}$ is $(n-1)$-dimensional, spanned by by the dual basis $\chi_1^{\ast},\ldots,\chi_{n-1}^{\ast}$.

We now construct the tropicalization map $\trpz:G(\overline{K})\rightarrow \mathrm{Q}$. Let $\gamma\in G(\overline{K})$, and write $\gamma^{\ast}:k[G]\rightarrow \overline{K}$ for the associated homomorphism of $k$-algebras, where 
$$k[\gln]=k[x_{ij}]_{\det x}\quad\hbox{\rm and}\quad k[\sln]=k[x_{ij}]/(1-\det x).$$ 
%Let $x_{ij,\gamma}=\gamma^{\ast}(x_{ij})$ for any $i,j$, and 
Write $x_{\gamma}$ for the matrix $\gamma^\ast(x)$. 
Let $L=\bigcup_{m}k(G\times G)((t^{1/m}))$ as in  \S\ref{amoeba}.
The morphism $\psi_{\gamma}:\spec L\rightarrow G$ is induced by the map
$$
\psi_{\gamma}^{\ast}:k[G]\rightarrow L, \quad f(x)\mapsto f(gx_{\gamma}h^{-1}).
$$
Thus $v_{\gamma}(f'_i)$ is the smallest value of the valuations of $i\times i$ minors of the matrix~$x_{\gamma}$ and $v_{\gamma}(f_i)=v(f'_i)-v(f'_{i+1})$ with the special cases $v(f_n)=v(f'_n)$ if $G=\gln$, or $v(f_1)=-v(f'_2)$ if $G=\sln$. This is a well-known method for calculating the invariant factors of a matrix, hence $v(f_i)=\alpha_i$,
where $\alpha_1,\ldots,\alpha_n$ are invariant factors of the matrix $x_{\gamma}$ in the decreasing order.
Therefore, %if $G=\gln$,
$$
\trpz(\gamma)= \alpha_1\chi_1^{\ast}+\cdots+\alpha_n\chi_n^{\ast}\quad\text{in }\mathrm{Q},
$$
where $\alpha_n=0$ when $G=\sln$.
%i.e. $val(\gamma)$ is the vector $(\alpha_1,\ldots,\alpha_n)$ with respect to the given basis. Similarly, if $G=\sln$, $val(\gamma)=(\alpha_1,\ldots,\alpha_{n-1})$.
This proves Theorems \ref{tropgln} and \ref{tropsln}.
%Let $Y\subseteq G$ be a closed subvariety given by an ideal $I\subseteq k[G]$. Given $\gamma\in G(\overline{K})$, let $\gamma^{\ast}:k[G]\rightarrow \overline{K}$ and $x_{\gamma}$ be as before. The $\overline{K}$-point $\gamma$ factors through $Y$ when $I\subseteq \ker \gamma^{\ast}$, i.e. when $h(x_{\gamma})=0$ for all $h\in I$, and the statements of the theorems follow.\end{proof}

If $G=\gln$, the valuation cone, which is the image of $\trpz$, is the set
$$
\cls{V}=\{(\alpha_1,\ldots,\alpha_n)\in \mathrm{Q}:\alpha_1 \geq \cdots \geq \alpha_n\}.
$$
while if $G=\sln$, it is the set
$$
\cls{V}=\crb{(\alpha_1,\ldots,\alpha_{n-1})\in \mathrm{Q}:\alpha_1 \geq \cdots \geq \alpha_{n-1}\text{ and } \sum_{i=1}^{n-1}\alpha_i+\alpha_{n-1}\geq 0}.
$$
Indeed, the sum $\sum\limits_{i=1}^{n-1}\alpha_i$ for a matrix of determinant~$1$ is equal to 
$-\alpha_{n-2}\ge-\alpha_{n-1}$. The valuation cones of $\gl{2}$ and $\sl{3}$ are shaded gray areas in Figures \ref{valconegl2} and \ref{valconesl3}.

\begin{figure}
    \centering
\caption{Valuation cones of $\gl{2}$ and $\sl{3}$}
    \begin{subfigure}[b]{0.45\textwidth}
\includegraphics[width=\textwidth]{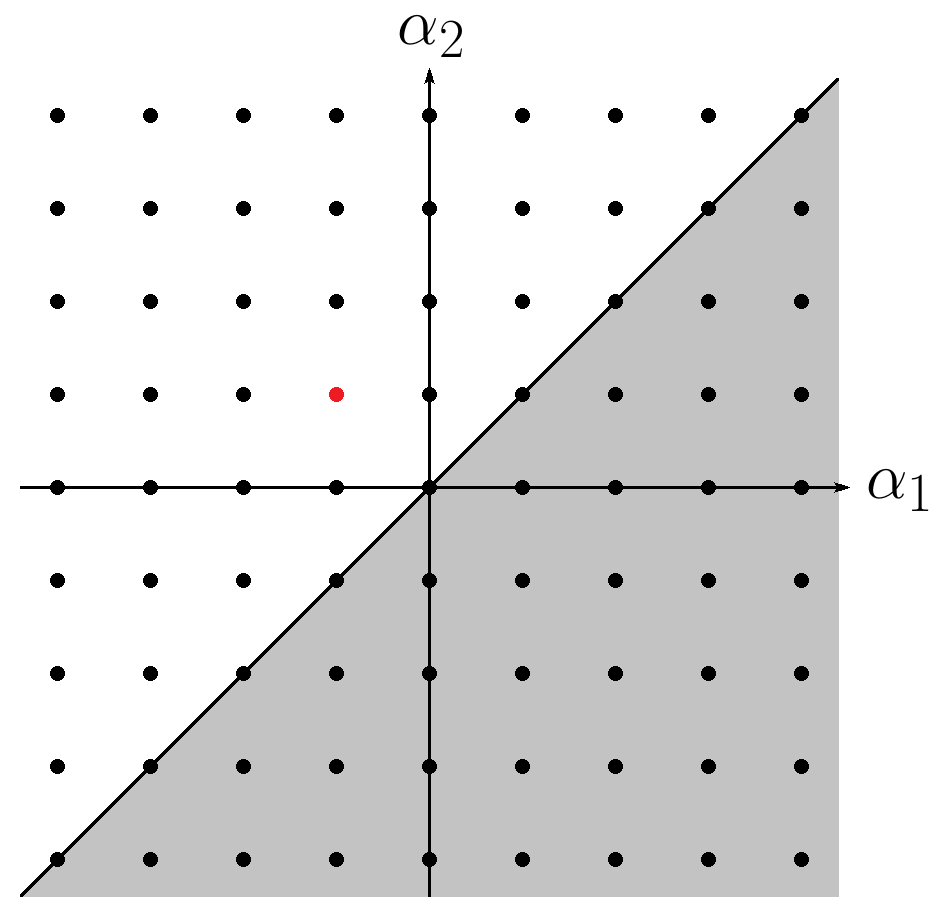}
        \caption{Valuation cone of $\gl{2}$}
        \label{valconegl2}
    \end{subfigure}
    \quad 
    \begin{subfigure}[b]{0.45\textwidth}
        \includegraphics[width=\textwidth]{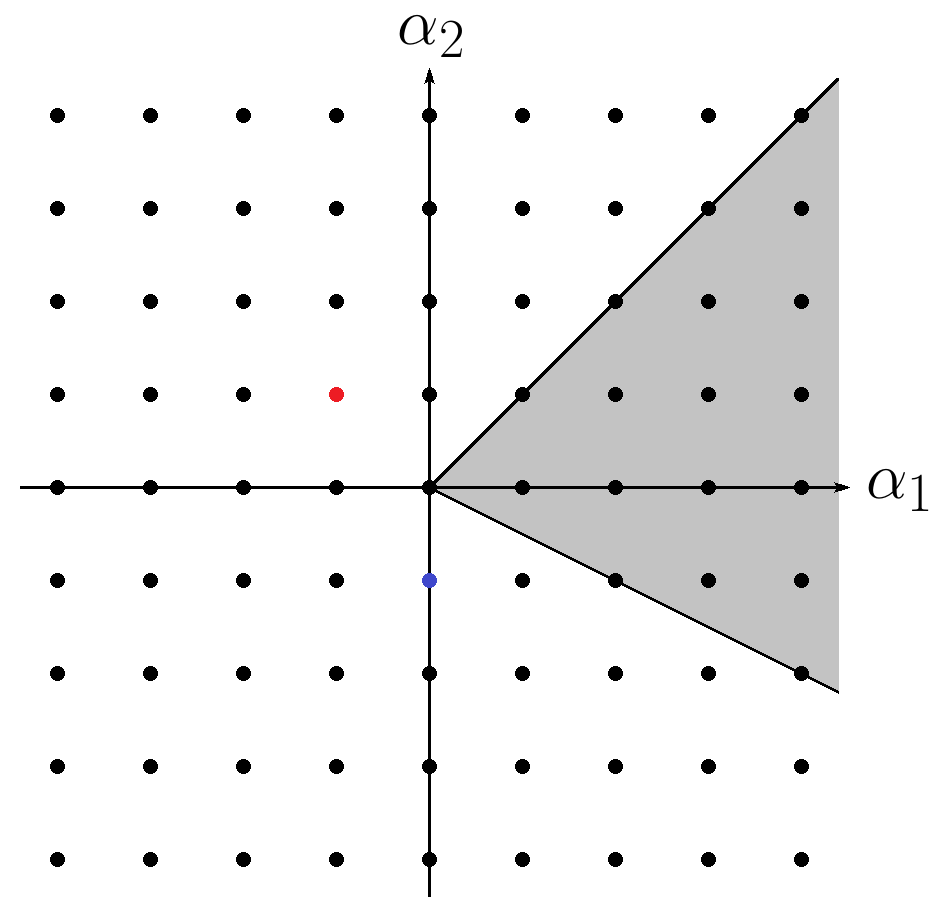}
        \caption{Valuation cone of $\sl{3}$}
        \label{valconesl3}
    \end{subfigure}
\end{figure}

\begin{exam} \label{tropcurvegl2} \rm
Let $C$ be the line in $\gl{2}$ defined by the ideal
$$
I=(x_{11}-x_{12}-1,\,x_{12}-x_{21},\,x_{22}).
$$
A matrix in $C(K)$ %that satisfies the equations $x_{11}=x_{12}+1$, $x_{12}=x_{21}$, and $x_{22}=0$ 
is of the form
$$
\twomat{z(t)+1}{z(t)}{z(t)}{0},\quad z(t)\in K.
$$
The determinant of this matrix is $-z(t)^2$. If $\nu(z(t))\geq 0$, then the smallest invariant factor is $\alpha_2=0$, and $\alpha_1=\nu(-z(t)^2)$, so that $\alpha_1$ can be any non-negative integer. This gives the ray consisting of the positive $\alpha_1$-axis in $\mathrm{Q}$, say $\cls{R}_1$. If $\nu(z(t))< 0$, then the smallest invariant factor is $\alpha_2=\nu(z(t))$, and $\alpha_1+\alpha_2=\nu(-z(t)^2)=2\alpha_2$, so that $\alpha_1=\alpha_2$. This corresponds to the ray along the line $\alpha_1=\alpha_2$ in the third quadrant, call it $\cls{R}_2$. 
Thus $\trop C$ is the union of rays $\cls{R}_1$ and $\cls{R}_2$, see Figure~\ref{examtropcurve}.

Since $\trop C$ has dimension $1$, it completely determines the toroidal spherical variety in which the tropical compactification occurs. % (in this example there are no non-toroidal spherical varieties supproted on $\trop C$). 
We describe this spherical variety.
We view $\gl{2}$ as a quasi-affine variety in $\aa^4$ (with coordinates $x_{ij}$). Consider the projective space $\pp^4$ with homogeneous coordinates
$$
\brk{X_0,X}=\brk{X_0,\twomat{X_{11}}{X_{12}}{X_{21}}{X_{22}}}.
$$
We identify $\aa^4$ with the affine space $(X_0\neq 0)$ in $\pp^4$. The action of $\gl{2}\times \gl{2}$ on $\gl{2}$ extends to an action on all of $\pp^4$:
$$
(g,h)\cdot  (X_0,X)=(X_0,gXh^{-1}),\quad (g,h)\in \gl{2}\times \gl{2}, \, (X,X_0)\in \pp^4,
$$
and so $\gl{2}\emb \pp^4$ is a spherical embedding. Its colored fan is given in Figure~\ref{wefvwefv}(A). 
The rays $\cls{R}_1$ and $\cls{R}_2$ are cones of this fan, and so 
the spherical variety associated to $\trop C$ is a $\gl{2}$-stable open subvariety of $\pp^4$.
%The ray $\cls{R}_1$ corresponds to the embedding of $\gl{2}$ in the punctured affine space $\xx_1=\aa^4-\crb{0}$, i.e. it adds the rank $1$ matrices in $\aa^4$, and $\cls{R}_2$ corresponds to the embedding of $\gl{2}$ in
%$$\xx_2=\crb{\brk{X_0,X}\in \pp^4:\det X\neq 0}$$
%i.e. it adds invertible matrices at infinity. 
It follows that the tropical compactification of $C$ is its closure in %occurs in the variety $\xx\subset \pp^4$ that results when $\xx_1$ and $\xx_2$ are glued along $\gl{2}$, i.e. their union inside 
$\pp^4$, which contains two points in the boundary, 
$$
\brk{1,\twomat{1}{0}{0}{0}}\quad\text{and}\quad \brk{0,\twomat{1}{1}{1}{0}}.
$$

\begin{figure}
    \centering
        \caption{$\trop C$ is the union of two rays}
        \label{examtropcurve}
\includegraphics[width=0.45\textwidth]{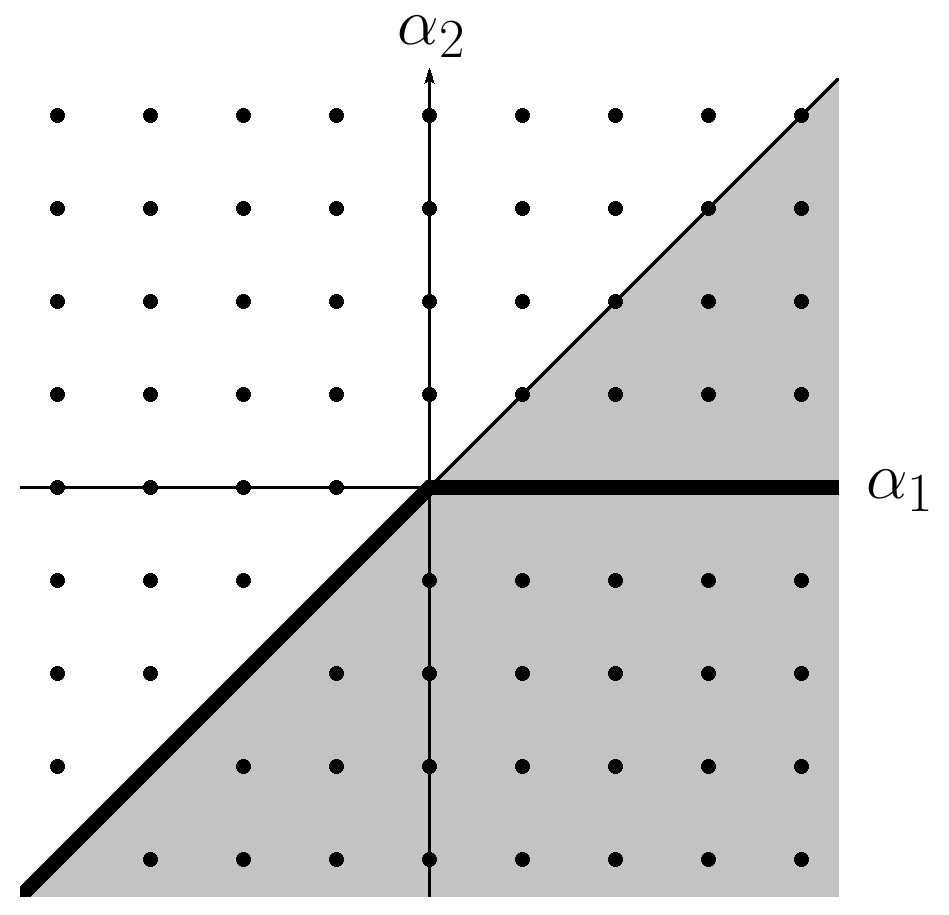}
\end{figure}

\end{exam}

\begin{exam} \label{examplesgl2} \rm
Let $Y_1$ be the hyperplane $(x_{11}=1)$ in $\gl{2}$. A matrix in $Y_1(K)$ 
satisfies $\nu(x_{11}(t))=0$, hence the smallest invariant factor $\alpha_2$ is  non-positive. There is no restriction on $\alpha_1$. Indeed, if $(\alpha_1,\alpha_2)$ is a pair of integers with $\alpha_1\geq \alpha_2$ and $\alpha_2\leq 0$, then  the  matrix
$$
\twomat{1}{t^{\alpha_1}}{t^{\alpha_2}}{0}\in Y_1(K)
$$
has invariant factors $(\alpha_1,\alpha_2)$. Therefore, $\trop Y_1$ is the one of Figure~\ref{afvqvq}(A).

If $Y_2=V(x_{21}-x_{12}^2)$, then $\trop Y_2$ is the whole valuation cone. Indeed, for any pair of integers $(\alpha_1,\alpha_2)$ with $\alpha_1\geq \alpha_2$, the matrix
$$
\twomat{t^{\alpha_1}}{0}{0}{t^{\alpha_2}}\in Y_2(K)
$$
has invariant factors $(\alpha_1,\alpha_2)$.
Now consider the subvariety $Y=V(x_{11}-1,x_{21}-x_{12}^2)$. An matrix in $Y(K)$
is of the form
$$
\twomat{1}{y(t)}{y^2(t)}{z(t)},\quad y(t),z(t)\in K.
$$
The determinant of this matrix is $z(t)-y^3(t)$. If $\nu(y(t)),\nu(z(t))\geq 0$, then $\alpha_2=0$ and $\alpha_1$ can be any positive number, which gives the positive $\alpha_1$-axis.
If $\nu(y(t))\leq 0,\nu(z(t))\geq 0$, then $\alpha_2=2\nu(y(t))$ and $\alpha_1=\nu(y(t))$. This is the ray along the line $\alpha_2=-\alpha_1/2$ in the third quadrant.
If $\nu(y(t))\geq 0,\nu(z(t))\leq 0$, then $\alpha_2=\nu(z(t))$ and $\alpha_1=0$, which is the negative $\alpha_2$-axis.
If $\nu(y(t)),\nu(z(t))\leq 0$, then there are three subcases. If $\nu(z(t))$ is more than $2\nu(y(t))$ or less than $3\nu(y(t))$, then we get back the ray along $\alpha_2=-\alpha_1/2$ or the negative $\alpha_2$-axis, respectively. If $3\nu(y(t))\leq \nu(z(t))\leq 2\nu(y(t))$, then $\alpha_2=\nu(z(t))$ and $\alpha_2/2\leq \alpha_1\leq 0$, and we get the cone between $\alpha_2=-\alpha_1/2$ and the negative $\alpha_2$-axis.
$\trop Y$ is given in Figure \ref{afvqvq}(B). Even though $Y=Y_1\cap Y_2$, 
$\trop Y$ is strictly smaller than  $\trop Y_1 \cap \trop Y_2$.

\begin{figure}
	\caption{Tropicalizations of subvarieties of $\gl{2}$}\label{afvqvq}
    \centering
    \begin{subfigure}[b]{0.45\textwidth}
        \includegraphics[width=\textwidth]{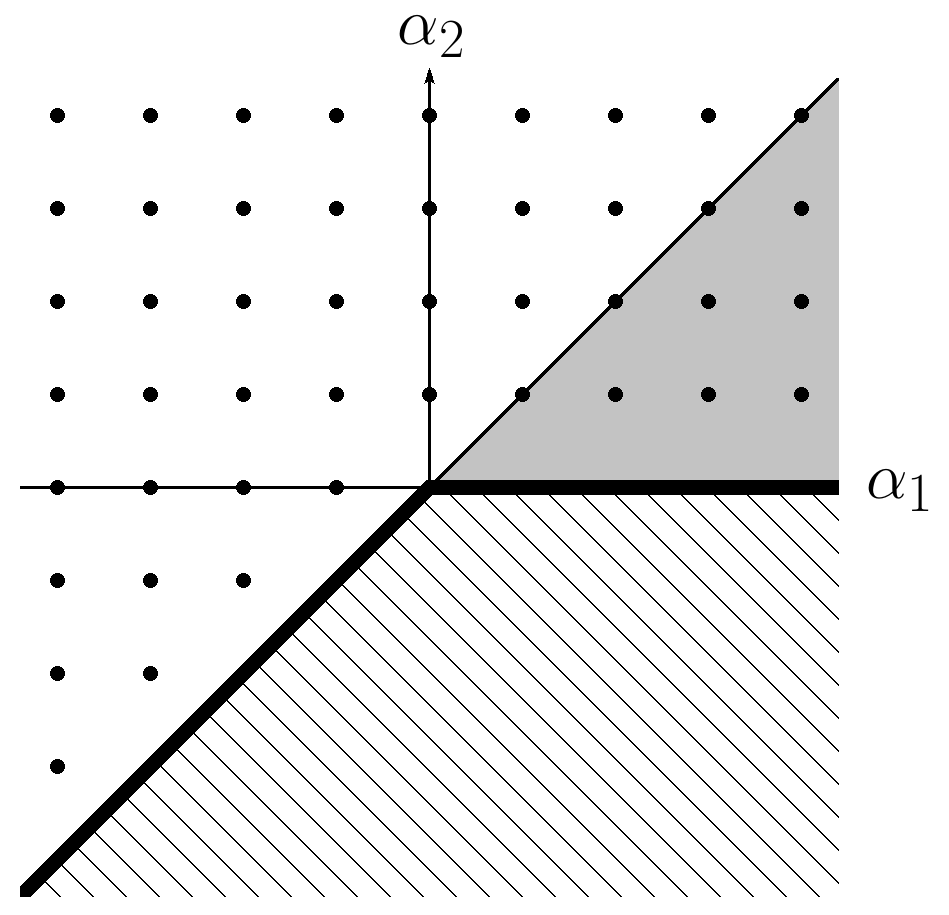}
	\caption{$\trop V(x_{11}-1)$}
        \label{examtrophyperp}
    \end{subfigure}
    \quad
    \begin{subfigure}[b]{0.45\textwidth}
	\includegraphics[width=\textwidth]{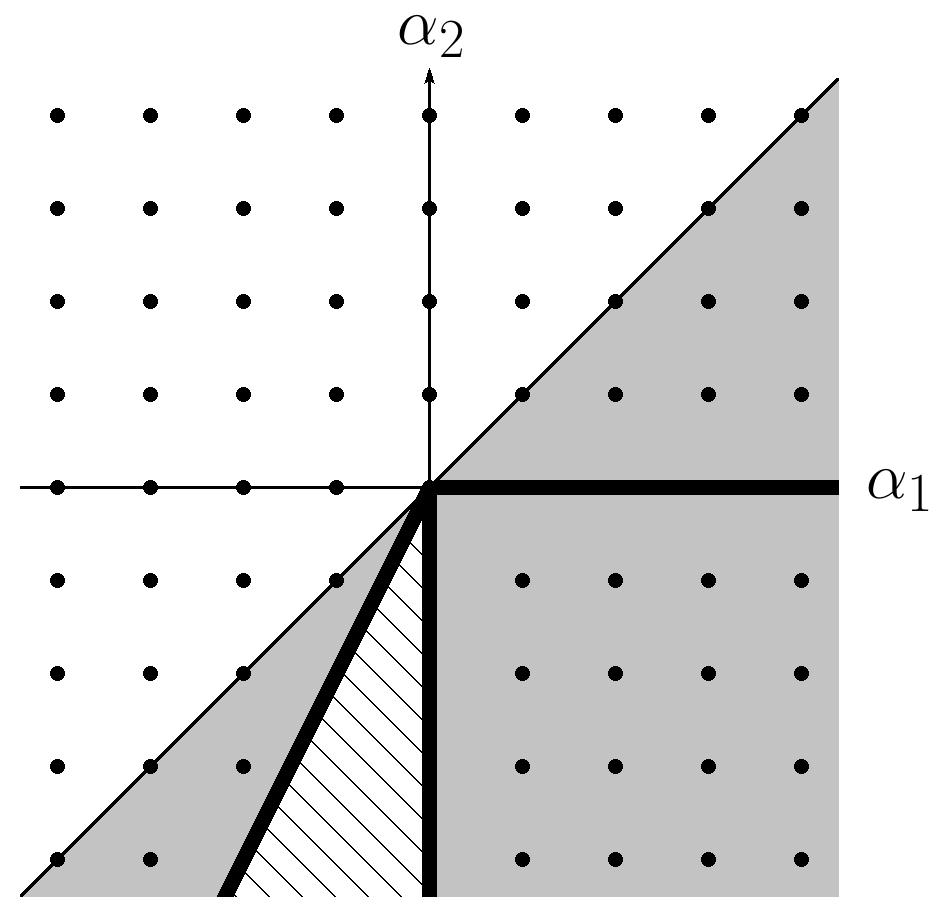}
\caption{$\trop V(x_{11}-1,x_{21}-x_{12}^2)$}
        \label{examtropsecond}
    \end{subfigure}
\end{figure}

\end{exam}

\begin{exam} \label{examtropso4}\rm
%Assume that $\overline{K}$ is algebraically closed, which holds if $\charact k=0$. 
Consider the special orthogonal group $\so{4}$ as a subvariety of $\sl{4}$:
$$
\so{4}=\crb{x\in \sl{4}:x^tx=e},
$$
where $e$ is the identity matrix. Let $x(t)\in\so{4}(K)$.
 The invariant factors of $x(t)^t$ and $x(t)$ are the same, while the ones of $e$ are all zero. The invariant factors $(\alpha_1,\alpha_2,\alpha_3,\alpha_4)$ of $x(t)$ must satisfy the following Horn's inequalities (see \S\ref{represvarty}):
$$
\alpha_1+\alpha_4\geq 0\quad \text{and}\quad \alpha_2+\alpha_3\geq 0.
$$
Since $x(t)$ has determinant $1$, $\alpha_4=-\alpha_1-\alpha_2-\alpha_3$, and the first inequality becomes $\alpha_2+\alpha_3\leq 0$, hence $\alpha_3=-\alpha_2$. This forces $\alpha_4=-\alpha_1$. We show that any quadruple $(\alpha_1,\alpha_2,\alpha_3,\alpha_4)$ that satisfies these two conditions is in $\trop \so{4}$.

Pick $(\alpha_1,\alpha_2,-\alpha_2,-\alpha_1)$ with $\alpha_1\geq \alpha_2\geq 0$. The matrix with entries in $\overline{K}$:
$$
\brk{ \begin{array}{cccc}
t^{-\alpha_1} & \sqrt{1-t^{-2\alpha_1}} & 0 & 0 \\
-\sqrt{1-t^{-2\alpha_1}} & t^{-\alpha_1} & 0 & 0 \\
0 & 0 & t^{-\alpha_2} & \sqrt{1-t^{-2\alpha_2}} \\
0 & 0 & -\sqrt{1-t^{-2\alpha_2}} & t^{-\alpha_2} \\
\end{array}}
$$
 is orthogonal, of determinant $1$, and has invariant factors $(\alpha_1,\alpha_2,-\alpha_2,-\alpha_1)$. It follows that
$
\trop \so{4}=\crb{(\alpha_1,\alpha_2,\alpha_3)\in \cls{V}:\alpha_3=-\alpha_2}$.
%It is the cone of dimension two with extremal rays the $\alpha_1$-axis and the ray which is the intersection of the planes $\alpha_1=\alpha_2$ and $\alpha_3=-\alpha_2$, for $\alpha_2\geq 0$. We draw $\trop \so{4}$ in Figure \ref{examso4}. It is the dark gray area; the lightly shaded area is the plane $\alpha_1=\alpha_2$ (for $\alpha_1,\alpha_2\geq 0$).

%\begin{figure}
   % \centering
      %  \caption{Tropicalization of $\so{4}$ in $\sl{4}$}
%        \label{examso4}
%\includegraphics[width=0.45\textwidth]{}
%\end{figure}
\end{exam}

\subsection{Subvarieties of $\pgln$} \label{pgln}
%Tropicalization of subvarieties of $\pgln$ is similar to $\gln$ or $\sln$. 
%Although invariant factors are not defined, 
%For a homogeneous matrix with entries in $K$ it does not make sense to ask for its invariant factors. For instance, if $\alpha\in \zz$, $I$ and $t^{\alpha}I$ refer to the same homogeneous matrix with entries in $K$, but the invariant factors of $I$ are all $0$, while the ones of $t^{\alpha}I$ are all $\alpha$. 
Any $x\in \pgln(\overline{K})$ can be represented by a matrix 
%iven any homogeneous matrix, there is a representation of it 
for which the smallest invariant factor is $0$. Any such representation has the same invariant factors,
which we call %se invariant factors, 
(without the last $0$) the 
\emph{invariant factors} of~$x$. %the homogeneous matrix. This definition extends naturally to invariant factors of a homogeneous matrix with entries in $\overline{K}$.
%We draw the valuation cone for the case $n=3$ in Figure \ref{pgl3val}.
%\begin{figure}
   % \centering
      %  \caption{Valuation cone of $\pgl{3}$}
%        \label{pgl3val}
%\includegraphics[width=0.45\textwidth]{}
%\end{figure}
The proof of the next theorem is the same as for Theorems \ref{tropgln} and~\ref{tropsln}.

\begin{thm} \label{troppgln}
Let $Y\subset \pgln$ be a closed subvariety. %, defined by some homogeneous ideal $I\subseteq k[\pgln]$.
Then $\trop Y$ consists of $(n-1)$-tuples $(\alpha_1,\ldots,\alpha_{n-1})$ of  invariant factors 
of elements $x\in Y(\overline{K})$.
%(in decreasing order) of invertible homogeneous matrices with entries in $\overline{K}$, that satisfy the homogeneous equations of $I$.
\end{thm}

The valuation map assigns to $x$ its invariant factors and
the valuation cone is 
$$
\cls{V}=\{(\alpha_1,\ldots,\alpha_{n-1})\in \mathrm{Q}:\alpha_1 \geq \cdots \geq \alpha_{n-1}\geq 0\}.
$$

\subsection{Tropicalization of the Representation Variety of $\pi_1(\mathbb{S}_{0,3})$} \label{represvarty}

Let $\mathbb{S}_{0,3}$ denote the Riemann sphere with $3$-punctures. Its fundamental group  is %given by the following generators and relations:
$$
\Gamma=\pi_1(\mathbb{S}_{0,3})=\inn{a,b,c:abc=1}=\inn{a,b,c:ab=c^{-1}},
$$
where $a,b,c$ are loops around the first, second, and third puncture, respectively. $\Gamma$
is isomorphic to the free group in $2$ generators but this presentation is more natural for the problem.
Let $G$ be $\gln$ or $\sln$. The $G$-representation variety of $\Gamma$ is
$$
\mathrm{Rep}_{G}(\Gamma)=\hom(\Gamma,G)=\crb{(x,y,z)\in G^3:xy=z^{-1}}.
$$
We view $G^3$ as a homogeneous space via the action of $G^6=(G\times G)^3$ by multiplication on the left and on the right. The lattice $\mathrm{Q}$ has dimension $3n$ if $G=\gln$, and $3(n-1)$ if $G=\sln$. Consider the standard basis for $\mathrm{Q}$, which is an extension of the one given in \S\ref{lingps} to the product of three copies of $G$.

If $G=\gln$, the set $\trop \mathrm{Rep}_{G}(\pi_1(\mathbb{S}_{0,3}))$  consists of (positive scalar multiples of)  $(3n)$-tuples of integers $(\alpha_1,\ldots,\alpha_n,\beta_1,\ldots,\beta_n,\gamma_1,\ldots,\gamma_n)$ that appear as invariant factors of matrices $x,y,z$ with entries in $K$, such that $xy=z^{-1}$. We write $(\gamma'_1,\ldots,\gamma'_n)$ for the invariant factors of the matrix $z^{-1}$, which are $\gamma'_{1}=-\gamma_n$, $\gamma'_2=-\gamma_{n-2}$, etc. It suffices to describe $(3n)$-tuplets of integers 
$$(\alpha_1,\ldots,\alpha_n,\beta_1,\ldots,\beta_n,\gamma'_1,\ldots,\gamma'_n)$$ that appear as invariant factors of matrices $x,y,z'$ with entries in $K$, such that $xy=z'$. The case of $G=\sln$ is similar.

This problem is equivalent to Horn's problem, see \cite[Thm. 7 \& 17]{F}. Its
solution is given by the Horn's inequalities
%In particular, the elements of the $(3n)$-tuple $(\alpha_1,\ldots,\alpha_n,\beta_1,\ldots,\beta_n,\gamma'_1,\ldots,\gamma'_n)$ appear as invariant factors of matrices $x,y,z'$ such that $xy=z'$ if and only if
\begin{equation} \label{hornequality}
\sum_{i=1}^n \alpha_i+\sum_{i=1}^n \beta_i=\sum_{i=1}^n \gamma'_i,
\end{equation}
and 
\begin{equation} \label{horninequality}
\sum_{k\in K} \gamma'_i\leq \sum_{i\in I} \alpha_i+\sum_{j\in J} \beta_i\quad\text{for all }(I,J,K)\in T_r^n,
\end{equation}
where $I,J,K$ are subsets of $\crb{1,\ldots,n}$ of the same cardinality, and $T_r^n$ are defined inductively as
$$
T_r^n=\crb{(I,J,K)\in U_r^n:\begin{array}{l}\smallskip\text{for every }p<r\text{ and } (F,G,H)\in T_p^r, \\ \displaystyle{\sum_{f\in F}i_f+\sum_{g\in G}j_g\leq \sum_{h\in H}k_h+p(p+1)/2} \\  \end{array}}
$$
where $U_r^n$ are the sets of triplets $(I,J,K)$ given by:
$$
U_r^n=\crb{(I,J,K):\sum_{i\in I}i+\sum_{j\in J}j=\sum_{k\in K}k+r(r+1)/2}.
$$

For example,  $\trop\mathrm{Rep}_{\sl{2}}(\Gamma)$ is given by the inequalities
$$
\alpha_1\leq \beta_1+\gamma_1,\quad \beta_1\leq \gamma_1+\alpha_1,\quad \gamma_1\leq \alpha_1+\beta_1,
$$
see Figure \ref{troprepvar}. 
\begin{figure}
\caption{Tropicalization of the $\sl{2}$-representation variety of $\pi_1(\mathbb{S}_{0,3})$}
\label{troprepvar}
\centering
\includegraphics[width=0.45\textwidth]{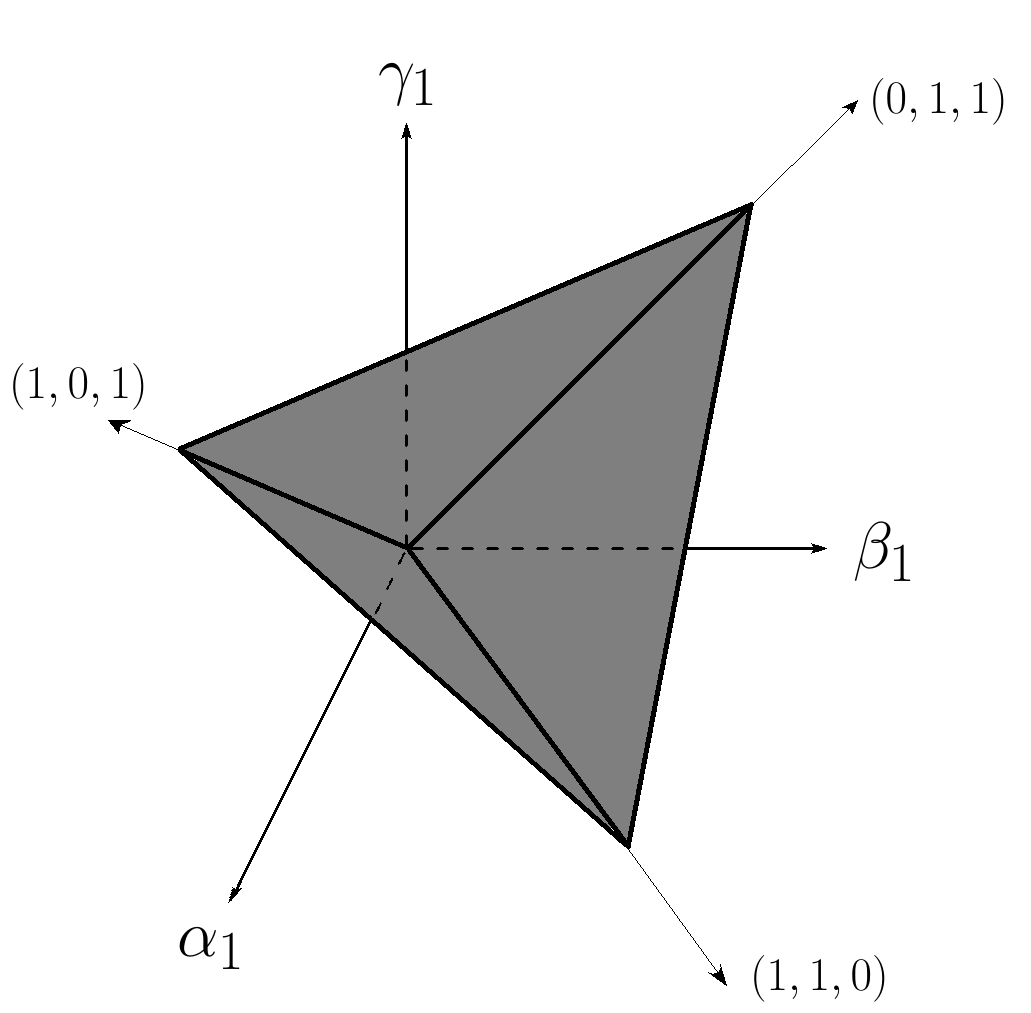}
\end{figure}
The valuation cone is the first quadrant.

\subsection{Tropical Compactification of the Maximal Torus of $\gl{2}$}\label{tropcompmaxtorus}
In this section we give an example of computing 
a tropical compactification in the spirit of Theorem~\ref{equivflat}, i.e.~by starting with the closure of $Y$ in a 
complete spherical variety and modifying it with blow-ups until the multiplication map of $\overline Y$
becomes flat. This amounts to refining the colored fan. The cones that lie outside $\trop Y$ are then  removed
and the multiplication map becomes flat and surjective. 
Let 
$$
T=\crb{x\in \gl{2}:x_{12}=x_{21}=0}.
$$
%We want to find a tropical compactification of $T$. The idea is to find the tropicalization of $T$, and begin with a ``naive'' compactification $\overline{T}\subset \xx$ such that the colored fan of $\xx$ is supported on $\trop T$. Then exhibit successive blow-ups of $\xx$ at the locus of ``problematic'' points until the multiplication map of the pure transform becomes flat. This amounts to refining the fan of $\xx$ and removing colors.
The tropicalization of $T$ is all of the valuation cone. Indeed, given a pair of integers $(\alpha_1,\alpha_2)$ with $\alpha_1\geq \alpha_2$, the invertible matrix
$$
\twomat{t^{\alpha_1}}{0}{0}{t^{\alpha_2}}
$$
%satisfies the equations that define $T$ and 
has invariant factors $\alpha_1,\alpha_2$. We begin by compactifying $T$ in a spherical variety supported on $\cls{V}$, i.e. a complete spherical variety.
We view $\gl{2}$ as an open subset of $\aa^4$, with coordinates $x_{ij}$, which in turn is embedded in $\pp^4$, with homogeneous coordinates $(X_0,X)=(X_0,(X_{ij}))$, and is identified with $(X_0\neq 0)$,
see Example~\ref{tropcurvegl2}. 
The spherical variety $\pp^4$ has two closed $\gl{2}$-orbits: the origin of $0\in\aa^4$
 and the set of rank $1$ matrices ``at infinity''. %There are three other orbits, the set of invertible matrices at infinity, the matrices of rank $1$ in $\aa^4$, and the open orbit $\gl{2}$. 
 The colored fan of $\pp^4$ is shown in Figure \ref{wefvwefv} (A).

Let $T'\subset \pp^4$ be the closure of $T$. We claim that the multiplication map $\mu_{T'}:\gl{2}\times \gl{2}\times T'\rightarrow \pp^4$ is flat everywhere but $\mu_{T'}^{-1}(0)=\gl{2}\times \gl{2}\times \crb{0}$. We first show that all fibers of $\mu_{T'}$ but the one over $0\in \pp^4$ have the same dimension.
We~use the following simple observation:

\begin{prop} \label{fibersorbit}
Let $G$ be an algebraic group over $k$, $X$ a $G$-variety, and $Y\subseteq X$ a closed subvariety. The non-empty fibers of the multiplication map of $Y$
%$$
%\mu_Y:G\times Y\rightarrow X,\quad (g,y)\mapsto gy
%$$
over points in an orbit $O$ have dimension $\dim G+\dim (Y\cap O)-\dim O$.
\end{prop}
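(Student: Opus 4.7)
The plan is to compute the dimension of the fiber $F = \mu_Y^{-1}(x)$ (for $x \in O$) by projecting it onto $Y$ and applying the orbit-stabilizer theorem fibrewise. I would consider the second projection $p : F \to Y$, $(g,y) \mapsto y$, determine its image, show its fibres are equidimensional cosets, and then invoke the fibre-dimension formula.

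For the image of $p$: any $(g,y)\in F$ satisfies $gy=x\in O$, so $y \in G\cdot x = O$; conversely, for any $y \in Y \cap O$ transitivity of $G$ on $O$ furnishes some $g\in G$ with $gy=x$, giving $(g,y)\in F$. Hence $p(F)=Y\cap O$, and $F\ne\varnothing$ is equivalent to $Y\cap O\ne\varnothing$. For the fibres of $p$: given $y\in Y\cap O$, fix $g_0\in G$ with $g_0 y = x$, so that $p^{-1}(y) = g_0\cdot \mathrm{Stab}_G(y)$ as a coset of the stabilizer. All stabilizers of points of $O$ are mutually conjugate, and by orbit-stabilizer they have dimension $\dim G-\dim O$. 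Thus every fibre of $p$ is equidimensional of dimension $d := \dim G -\dim O$, and combining image and fibre dimensions should give $\dim F = \dim(Y\cap O)+\dim G-\dim O$, as claimed.

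The main delicate point is that $Y\cap O$ need not be irreducible and $p$ need not be flat, so I cannot directly appeal to the cleanest form of the fibre-dimension theorem. The way around this is to work component by component: since $p$ is surjective with all fibres of constant dimension $d$, the preimage of each irreducible component $Z$ of $Y\cap O$ is equidimensional of dimension $\dim Z + d$, and the maximum over components recovers the desired formula. Everything else is a routine application of the group-action structure and orbit-stabilizer.
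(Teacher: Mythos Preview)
Your proof is correct, but the paper argues differently. Rather than dissecting a single fibre $F=\mu_Y^{-1}(x)$ via the projection $p:F\to Y\cap O$ and invoking orbit--stabilizer on each sub-fibre, the paper first observes that for any two points $x,y\in O$, say $y=hx$, the automorphism $(g,z)\mapsto(gh,z)$ of $G\times Y$ identifies $\mu_Y^{-1}(x)$ isomorphically with $\mu_Y^{-1}(y)$; so all fibres over $O$ are isomorphic, in particular equidimensional. It then computes this common dimension by restricting $\mu_Y$ to the surjective map $G\times(Y\cap O)\to O$ and reading off the generic fibre dimension $\dim G+\dim(Y\cap O)-\dim O$.

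Your approach is more direct and self-contained at a single point, at the cost of the component-by-component bookkeeping you flag (which you handle correctly; note also that each fibre of $p$ is a coset of a group subscheme and hence pure-dimensional, so the equidimensionality claim for $p^{-1}(Z)$ is justified). The paper's approach sidesteps that subtlety entirely by reducing to a generic fibre, and as a bonus yields the stronger statement that the fibres over $O$ are mutually isomorphic, not merely of equal dimension.
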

%\begin{proof}
%First we show that all fibers over points in $O$ have the same dimension. Let $x,y\in O$, say $y=hx$ for some $h\in G$. Consider the isomorphism of varieties:
%$$\phi:G\times Y \isom G\times Y,\quad (g,z)\mapsto (gh,z).$$
%The fiber of $\mu_{Y}\circ \phi$ over $x$ is the same as the fiber of $\mu$ over $y$, and is also isomorphic to the fiber of $\mu_Y$ over $x$. Therefore the fibers of $\mu_Y$ over $x$ and over $y$ are isomorphic, hence of the same dimension.

%Assume that $Y\cap O$ is non-empty. The multiplication map $\mu_Y$ restricts to a surjective morphism $G\times (Y\cap O)\rightarrow O$. The fibers over points from an open set of $O$ have dimension:
%$$\dim(G\times (Y\cap O))-\dim O=\dim G +\dim (Y\cap O)-\dim O.$$
%From the above all fibers over $O$ have the same dimension, which must be $\dim G +\dim (Y\cap O)-\dim O$, and we are through.
%\end{proof}

The dimension of $\gl{2}\times \gl{2}$ is $8$, while the one of $\pp^4$ is $4$. We use Proposition \ref{fibersorbit} on each orbit of $\pp^4$ to show that the dimension of all fibers but the one over $0\in \pp^4$ is $6$. For each orbit $O$, we need to show that $\dim (T' \cap O)-\dim O=-2$.
\begin{enumerate}
\item If $O=\gl{2}$, then $T'\cap O=T$ is of dimension $2$, while $\dim O=4$.

\item The orbit $O$ of rank $1$ matrices in $\aa^4$, i.e. in $(X_0\neq 0)$, is the divisor $(\det x=0)\subset \aa^4$, without the origin $0$, hence of dimension $3$. It intersects $T'$ at the following subset
of dimension $1$:
$$
\twomat{x_{11}}{0}{0}{0},\quad \twomat{0}{0}{0}{x_{22}},\quad x_{11},x_{22}\in k^{\times}.
$$
%of matrices of the first of the above two forms is the line $V(x_{12},x_{21},x_{22})$, which is of dimension $1$, without $0$. Similarly for the set of matrices of the second form. Therefore the intersection $T'\cap O$ is a union of two lines minus a point, hence of 

\item Let $O$ be the orbit of invertible matrices at infinity. It is an open set in the hyperplane $(X_0=0)$, hence of dimension $3$. Its intersection with $T'$ %consists of diagonal matrices at infinity:
$$
\brk{0,\twomat{X_{11}}{0}{0}{X_{22}}},\quad X_{11},X_{22}\in k^{\times},\, X_{11}X_{22}\neq 0.
$$
is isomorphic to $\pp^1\subset\pp^3$ minus two points, and so of dimension $1$.
\item Let $O$ be the orbit of rank $1$ matrices at infinity:
$$
O=\crb{(0,X)\in \pp^4:\det X=0}.
$$
It is a divisor on the hyperplane $(X_0=0)\cong \pp^3$, hence of dimension $2$. It intersects $T'$ at
a $0$-dimensional subset
$$
\brk{0,\twomat{1}{0}{0}{0}}\quad \text{and}\quad  \brk{0,\twomat{0}{0}{0}{1}}.
$$
\end{enumerate}

The fiber over $0$ is $\gl{2}\times \gl{2}\times \crb{0}$, which is of dimension $8$. We see that $\mu_{T'}$ is equidimensional everywhere but over the origin. Flatness of $\mu_{T'}$ over $\pp^4-\crb{0}$ follows from the following proposition, which is a direct consequence of \cite[Prop.~6.1.5]{EGAIV}. The closed set $T'$ is Cohen-Macaulay as a complete intersection in $\pp^4$, and $\gl{2}$ is an open set in $\aa^4$, thus $\gl{2}\times\gl{2}\times T'$ is Cohen-Macaulay.
\begin{prop} \label{flat615}
Let $\phi:X\rightarrow Y$ be a an equidimensional morphism of varieties such that
$Y$ is nonsingular and $X$ is Cohen-Macaulay.
Then $\phi$ is flat.
\end{prop}

\begin{figure}
    \centering
\caption{Fans of spherical varieties for $\gl{2}$}\label{wefvwefv}
    \begin{subfigure}[b]{0.45\textwidth}
        \includegraphics[width=\textwidth]{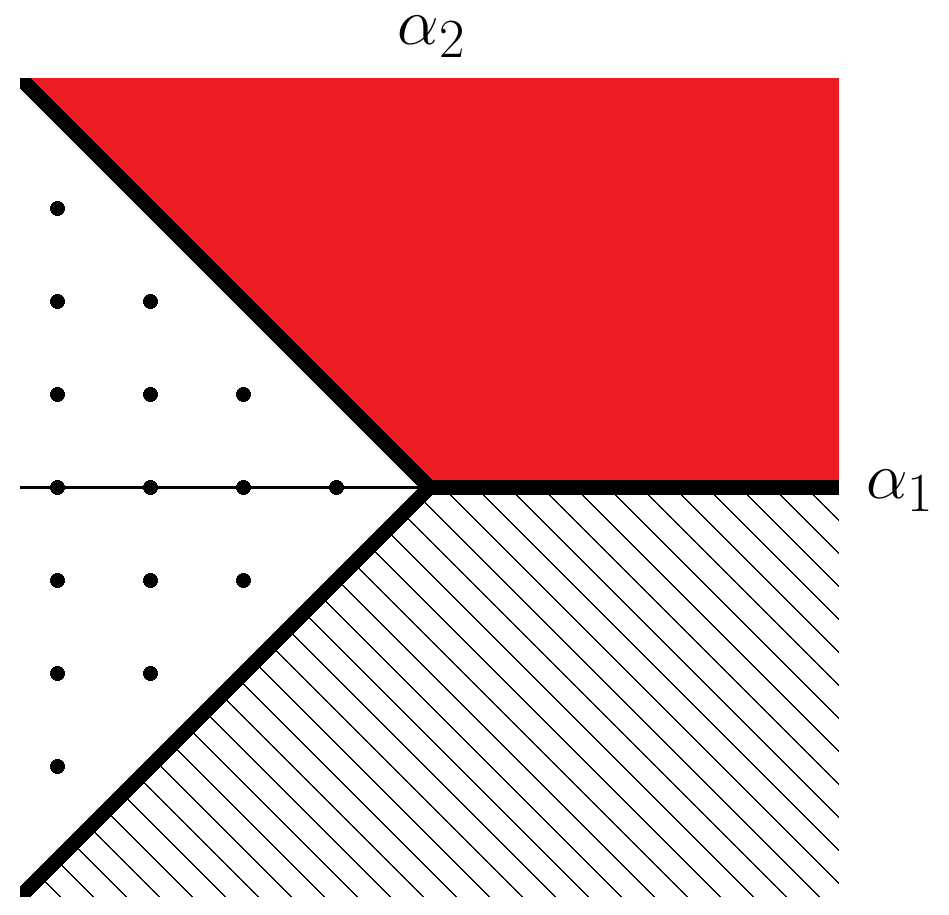}
        \caption{The fan of $\pp^4$}
        \label{fanproj4}
        
    \end{subfigure}
    \quad
    \begin{subfigure}[b]{0.45\textwidth}
        \includegraphics[width=\textwidth]{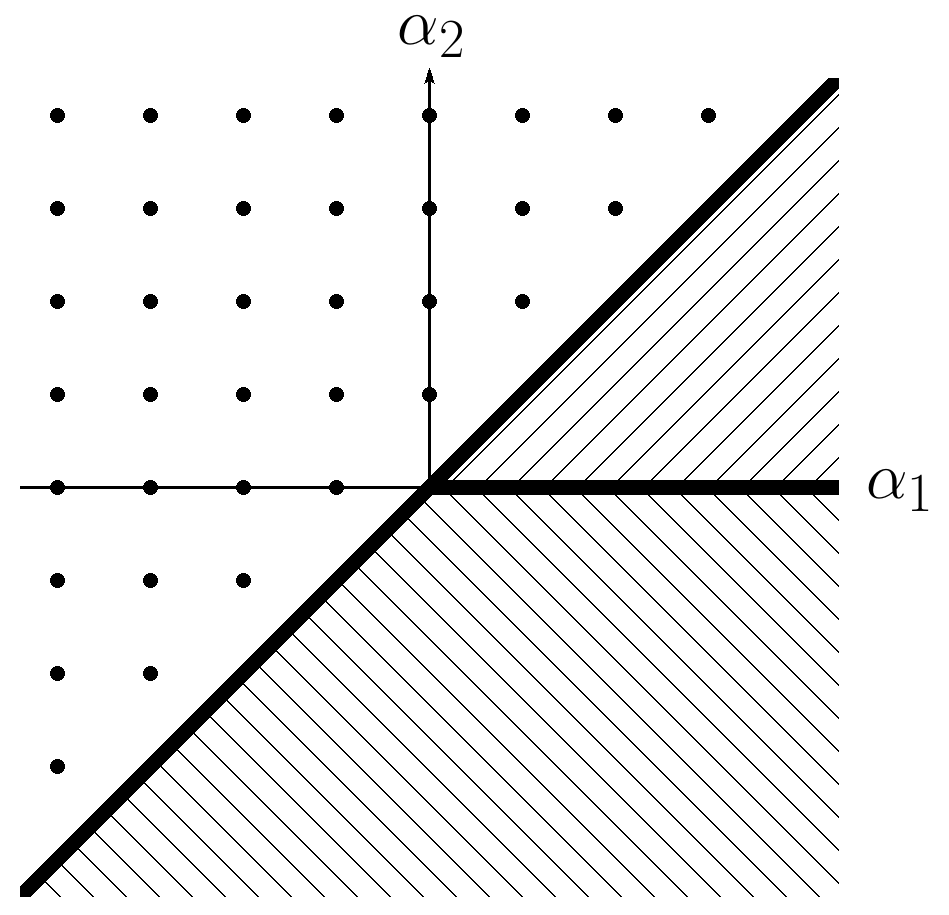}
        \caption{The fan of $\blow_0\pp^4$}
        \label{fanblowp4}
    \end{subfigure}
\end{figure}

Consider the blow-up morphism $\pi:\blow_0\pp^4\rightarrow \pp^4$ that restricts to an isomorphism $\blow_0\pp^4 -E\isom \pp^4-\crb{0}$. The exceptional divisor $E$ is isomorphic to $\pp^3$. We view its elements as $2\times 2$ homogeneous matrices; write $Y_{ij}$ for the associated homogeneous coordinates. The action of $\gl{2}\times\gl{2}$ on $\pp^4-\crb{0}$ extends to an action on $\blow_0\pp^4$ by left and right multiplication on the homogeneous matrices of the exceptional divisor. Under this action $\pi$ is a $\gl{2}$-morphism. Thus $\blow_0\pp^4$ is a spherical variety for the homogeneous space $\gl{2}$. The closed $\gl{2}$-orbits are the set of matrices of rank $1$ at infinity, and the matrices of rank $1$ in the exceptional divisor. The fan associated to $\blow_0\pp^4$ is given in Figure \ref{wefvwefv} (B). In particular $\blow_0\pp^4$ is toroidal.

We claim that the closure $\overline{T}\subset \blow_0\pp^4$ is a tropical compactification of $T$. Completeness of $\overline{T}$ follows from completeness of $\blow_0\pp^4$, or from the fact $\trop T=\supp \cla{F}$, where $\cla{F}$ is the fan associated to $\blow_0\pp^4$ (Prop.~\ref{propertrop}). Also, $\trop T=\supp \cla{F}$ implies that $\overline{T}$ intersects all orbits of $\blow_0\pp^4$ (Prop.~\ref{tevelevlem}), so that the multiplication map $\mu_{\overline{T}}:\gl{2}\times \gl{2}\times \overline{T}\rightarrow \blow_0\pp^4$ is surjective. We show that it is also flat.

%Since $\pi$ is a $\gl{2}$-morphism that restricts to an isomorphism $\blow_0\pp^4 -E\isom \pp^4-\crb{0}$,
The multiplication maps of $\overline{T}$ and $T'$ agree away from the exceptional divisor %and the origin $0$:
$$
\mu_{\overline{T}}|_{\gl{2}\times \gl{2}\times (\overline{T}-E)}=\mu_{T'}|_{\gl{2}\times \gl{2}\times (T'-\crb{0})}
$$
(as morphisms to $\blow_0\pp^4-E\cong \pp^4-\crb{0}$). The intersection $\overline{Y}\cap E$ consists of the diagonal homogeneous matrices of $E$. One can check using Proposition \ref{fibersorbit} that all fibers of $\mu_{\overline{T}}$ over $E$ are of dimension $6$; this case is identical to the case of fibers over the hyperplane $(X_0=0)$. Therefore all fibers of $\mu_{\overline{T}}$ are of the same dimension.

The closed set $\overline{T}$, which is the pure transform of $T'$, is Cohen-Macaulay as a complete intersection; this can be easily checked on the standard charts $U_{ij}\cong \pp^4$ of $\blow_0\pp^4\subset \pp^4\times \pp^3$. Applying Proposition \ref{flat615} we get that $\mu_{\overline{T}}$ is flat, and since it is surjective, faithfully flat. We deduce  $\overline{Y}\subset \blow_0\pp^4$ is a tropical compactification.

\begin{rem}\rm 
A similar argument shows that the closure of the maximal torus $T$ of a connected semi-simple 
group $G$ in its wonderful compactification $\overline G$, see \cite{CP}, 
is a spherical tropical compactification of $T$.
\end{rem}

\begin{rem}\rm
There are many spherical homogeneous spaces of rank~$1$ \cite[Tab.~5.10]{Ti}.
In this case a spherical toroidal tropical compactification of a subvariety $Y\subset G/H$ is 
uniquely determined by 
its spherical tropicalization $\trop(Y)$.
\end{rem}

%----------------------------------------------------------------------------------------------------------------------------------------------

\vspace{0.3cm}

{\small Department of Mathematics and Statistics,

University of Massachusetts,

Amherst, MA, USA.}

\vspace{0.3cm}

\end{document}